\documentclass[pdflatex,sn-mathphys-num]{sn-jnl}

\newcommand{\aq}{\mathbf{a}}
\newcommand{\wq}{\mathbf{w}}
\newcommand{\vq}{\mathbf{v}}
\newcommand{\eq}{\mathbf{e}}

\usepackage[T1]{fontenc}
\usepackage[utf8]{inputenc}
\usepackage{bbm}
\usepackage{amsmath, amsthm, amssymb, amsfonts}
\usepackage{hyperref}
\usepackage{enumitem}
\usepackage{mathrsfs}
\usepackage{tikz}
\usepackage{graphicx}

\newtheorem{theorem}{Theorem}[section]
\newtheorem{lemma}[theorem]{Lemma}
\newtheorem{proposition}[theorem]{Proposition}
\newtheorem{corollary}[theorem]{Corollary}
\theoremstyle{definition}
\newtheorem{definition}[theorem]{Definition}

\theoremstyle{remark}

\setlist[itemize]{
  topsep=6pt,
  partopsep=0pt,
  itemsep=2pt,
  parsep=0pt,
  labelindent=\parindent,
  leftmargin=*,
  align=left
}

\setlist[enumerate,1]{
  topsep=6pt,
  partopsep=0pt,
  itemsep=2pt,
  parsep=0pt,
  labelindent=\parindent,
  leftmargin=*,
  align=right
}

\begin{document}

\title[Dynamics, Random Products, and Ultrametric Geometry in Kiselman's Semigroup]{Dynamics, Random Products, and Ultrametric Geometry in Kiselman's Semigroup}

\author{\fnm{Luka} \sur{Andrenšek}}

\abstract{
   \normalfont\unboldmath
   We study certain dynamical and metric aspects of Kiselman's semigroup $K_n$.
   The level function $\mathcal{L}$ is introduced and shown to admit a simple description in terms of right multiplication by generators.
   We show that every sequence of partial products in $K_n$ is eventually constant.
   Using $\mathcal{L}$, we further study sequences of random partial products in $K_n$ and show that, in the independent and identically distributed setting where every generator is chosen with positive probability,
   the hitting time of the eventual constant value is distributed as a sum of $n$ independent geometric random variables.
   Finally, we define a natural ultrametric on $K_n$ arising from the level function and obtain some basic results on the associated metric balls and spheres.
}

\keywords{Kiselman's semigroup, dynamics, infinite products, partial products, random products, ultrametric, Markov chain, probability}

\maketitle

%--------------------------
\section{Introduction}

Let $n$ be a natural number.
We define \emph{Kiselman's semigroup} by the following presentation: 
\begin{equation*}
    K_n = \langle a_1, a_2, \dots, a_n \mid a_i^2 = a_i, a_i a_j a_i = a_j a_i a_j = a_i a_j, 1 \leq j < i \leq n \rangle,
\end{equation*}
and denote by $e$ the unit element of $K_n$.
Throughout the paper, we assume $n > 1$.

This family of semigroups arose as a generalisation of a semigroup of operators in convex analysis, which was studied by Kiselman in~\cite{kiselman}.
Many fundamental results on $K_n$ were proved by Kudryavtseva and Mazorchuk in~\cite{kudryavtseva}. 
Kiselman's semigroup arises in graph dynamics, as discussed by Collina and D'Andrea in~\cite{collina}, and admits interesting combinatorial properties in various settings.

D'Andrea and Stella showed in~\cite{dandrea23} that the cardinality of Kiselman's semigroups grows double-exponentially. 
Moreover, we described the endomorphism monoid of $K_n$ in~\cite{andrensek}, and studied certain equations in $K_n$ in~\cite{andrensek2}.

%---------------------------------------------
\section{Main Results}
The central object of this paper is the \emph{level function} $\mathcal{L} : K_n \to \{0, 1, \dots, n\}$, defined in Section~\ref{L_Function}.
The function $\mathcal{L}$ will later be shown to coincide with the distance from the zero element of $K_n$ with respect to a natural ultrametric, and its most important property is the following formula:
\[
\mathcal{L}(x a_i) = 
\begin{cases}
    \mathcal{L}(x) - 1, & i = \mathcal{L}(x), \\
    \mathcal{L}(x), & i \neq \mathcal{L}(x).
\end{cases}
\]
The function $\mathcal{L}$ is the main tool used to study sequences of random partial products, and it also gives rise to the ultrametric structure on $K_n$.

We investigate sequences of partial products in $K_n$, that is, sequences of the form
\[
x_1 x_2 \dots x_j,
\]
where $(x_j)_{j \geq 1}$ is a sequence with values in $\{a_1, a_2, \dots, a_n\}$.
We prove that every such sequence is eventually constant, and we determine its eventual value under certain assumptions.

Using $\mathcal{L}$, we study sequences of random partial products in $K_n$, that is, sequences of random variables of the form 
\[
X_1 X_2 \dots X_j,
\]
where $(X_j)_{j \geq 1}$ is a sequence of independent and identically distributed random variables with values in $\{a_1, a_2, \dots, a_n\}$.
We show that the random variables defined by
\[
\mathcal{L}(X_1 X_2 \dots X_j)
\]
form a Markov chain.
We further study the hitting time at which a sequence of random partial products attains its eventual constant value under the additional assumption that $\mathbb{P}(X_1 = a_i) > 0$ for all $i = 1, 2, \dots, n$.
We show that this hitting time is distributed as a sum of $n$ independent geometric random variables with success probabilities $\mathbb{P}(X_1 = a_i)$ for $i = 1, 2, \dots, n$.
In particular, if $(X_j)_{j \geq 1}$ are uniformly distributed on $\{a_1, a_2, \dots, a_n\}$, then the expected time until the sequence of random partial products becomes constant is $n^2$.

We also define a natural ultrametric on $K_n$, arising from the definition of the level function $\mathcal{L}$, and prove some results about the metric balls and spheres.

The paper is organised as follows.
In Section~\ref{Prelim}, we recall some background and necessary results on $K_n$.
In Section~\ref{DEL}, we define deletion endomorphisms and prove some of their properties.
These endomorphisms are used in the definition of $\mathcal{L}$, which we introduce in Section~\ref{L_Function} and study further there.
In Section~\ref{INF}, we study sequences of partial products and their random counterparts.
In Section~\ref{MET}, we define an ultrametric on $K_n$ and describe some of its basic properties.

%--------------------------
\section{Preliminaries}\label{Prelim}
For a set $X$, denote by $W(X)$ the set of all finite words over $X$ and denote the empty word by $\eq$.
By equipping $W(X)$ with the operation of \emph{concatenation}, $W(X)$ becomes a monoid with unit element $\eq$.
Let $A = \{\aq_1, \aq_2, \dots, \aq_n\}$.
Denote by $\varphi : W(A) \to K_n$ the \emph{canonical epimorphism}, defined by
\[
\varphi(\aq_{i_1} \aq_{i_2} \dots \aq_{i_k}) = a_{i_1} a_{i_2} \dots a_{i_k},
\]
for all $k \geq 1$ and $i_1, i_2, \dots, i_k \in \{1, 2, \dots, n\}$, and $\varphi(\eq) = e$.
In what follows, elements of $W(A)$ are written in boldface, for instance $\wq, \vq, \aq_i$, while elements of $K_n$ are written in standard mathematical italic, for example $x, y, a_i$.
For $k \in \mathbb{N}$, we denote by $[k] = \{1, 2, \dots, k\}$ and also set $[0] = \emptyset$.

In~\cite[Lemma 1]{kudryavtseva}, the following lemma was proved.
\begin{lemma}\label{lemma:shorten}
    The following statements hold.
    \begin{enumerate}[label=(\roman*), ref=\roman*]
        \item\label{lemma:shorten:i} Let $i \in [n]$ and $\wq \in W(\{\aq_1, \dots, \aq_{i-1}\})$.
        Then we have $a_i \varphi(\wq) a_i = a_i \varphi(\wq)$.
        \item\label{lemma:shorten:ii} Let $i \in [n]$ and $\wq \in W(\{\aq_{i+1}, \dots, \aq_n\})$.
        Then we have $a_i \varphi(\wq) a_i = \varphi(\wq) a_i$.
    \end{enumerate}
\end{lemma}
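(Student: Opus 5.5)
Both parts will be proved by induction on the length of $\wq$, using only the defining relations of $K_n$. Throughout, for $j < i$ the relations give $a_i a_j a_i = a_i a_j$ and $a_j a_i a_j = a_i a_j$. So a copy of $a_i$ placed after $a_i a_j$ can be deleted, and a copy of $a_j$ placed before $a_i a_j$ can be deleted.

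For (\ref{lemma:shorten:i}) we strengthen the claim. We show that $a_i \varphi(\wq) a_i = a_i \varphi(\wq)$ holds for every $\wq \in W(\{\aq_1,\dots,\aq_{i-1}\})$, by induction on the length $k$ of $\wq$.

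For $k=0$ the claim reads $a_i a_i = a_i$, which is idempotency.

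For the inductive step, write $\wq = \vq \aq_j$ with $j<i$ and put $v = \varphi(\vq)$. Then
\[
a_i v a_j a_i = (a_i v a_i) a_j a_i = a_i v (a_i a_j a_i) = a_i v a_i a_j = a_i v a_j .
\]
The first equality uses the induction hypothesis to insert $a_i$ after $a_i v$. The second applies the relation $a_i a_j a_i = a_i a_j$. The third uses the induction hypothesis again to remove the inserted $a_i$.

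For (\ref{lemma:shorten:ii}) the argument is the mirror image. Here $\wq \in W(\{\aq_{i+1},\dots,\aq_n\})$, and we induct by peeling letters off the left. Write $\wq = \aq_j \vq$ with $j>i$ and put $v = \varphi(\vq)$. The hypothesis $a_i v a_i = v a_i$ lets us insert $a_i$ before $v a_i$, so
\[
a_i a_j v a_i = a_i a_j a_i v a_i = (a_i a_j a_i) v a_i = a_j a_i v a_i = a_j v a_i .
\]
The first equality inserts $a_i$ using the hypothesis. The middle step uses the relation $a_i a_j a_i = a_j a_i$, valid for $i<j$. The last equality removes the extra $a_i$, again by the hypothesis.

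The main point to watch is the bookkeeping of which relation applies in each part. The key observation is that in (\ref{lemma:shorten:i}) the inner letters are smaller than $i$, so $a_i a_j a_i$ collapses to $a_i a_j$. In (\ref{lemma:shorten:ii}) the inner letters are larger than $i$, so it collapses to $a_j a_i$. Apart from this, the induction is routine.
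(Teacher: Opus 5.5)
Your proof is correct: both inductions check out against the defining relations, using $a_i a_j a_i = a_i a_j$ for $j<i$ in (i) and $a_i a_j a_i = a_j a_i$ for $j>i$ in (ii), and this induction on the length of $\wq$ is essentially the standard argument for the result, which the paper does not reprove but cites from Kudryavtseva and Mazorchuk~\cite[Lemma~1]{kudryavtseva}. One cosmetic point is that you do not actually strengthen the claim in (i) but prove exactly the stated identity; also, (ii) could be deduced from (i) via the antiautomorphism $\tau$ instead of repeating the induction.
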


%--------------------
\subsection{Idempotents in $K_n$}\label{IDE}
Let $X \subseteq [n]$.
If $X = \emptyset$, we define $e_X = e$, the unit element of $K_n$, and $\eq_X = \eq$, the empty word in $W(A)$.
Otherwise, write $X = \{i_1, i_2, \dots, i_k\}$, where $i_1 > i_2 > \dots > i_k$, and set
\[
e_X = a_{i_1} a_{i_2} \dots a_{i_k} \quad \text{and} \quad \eq_X = \aq_{i_1} \aq_{i_2} \dots \aq_{i_k}.
\]
We then have $\varphi(\eq_X) = e_X$ for every $X \subseteq [n]$.
In~\cite[Remark 16]{kudryavtseva}, it was observed that $e_{[n]}$ is the zero element of $K_n$ and we denote it by $f$.
Moreover, in~\cite[Proposition 11]{kudryavtseva}, it was shown that the set $\{e_X \mid X \subseteq [n]\}$ is the set of all idempotents in $K_n$.

Define the \emph{content} map $c : K_n \to \mathbbm{2}^{[n]}$, where $c(x)$ is the set of all $i$ such that $\aq_i$ occurs in $\wq$ for any $\wq \in W(A)$ with $x = \varphi(\wq)$.
In~\cite[Lemma 10]{kudryavtseva}, it was shown that $c$ is well-defined and that it is a semigroup epimorphism when $\mathbbm{2}^{[n]}$ is equipped with the operation $\cup$.

In~\cite[Lemma 12]{kudryavtseva}, the following lemma was proved.
\begin{lemma}\label{idempotent:power}
    Let $x \in K_n$. Then $x^k = e_{c(x)}$ for all $k \geq |c(x)|$.
\end{lemma}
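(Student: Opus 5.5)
We prove that $x^k = e_{c(x)}$ for all $k \geq |c(x)|$ by induction on $m = |c(x)|$, taking Lemma~\ref{lemma:shorten} and the fact that $c$ is a homomorphism onto $(\mathbbm{2}^{[n]}, \cup)$ as given. If $m = 0$, then $x = e$ and there is nothing to prove. Assume $m \geq 1$, let $i = \max c(x)$, and fix a word $\wq$ with $\varphi(\wq) = x$.

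\textbf{Step 1: normal form for $x$.} The occurrences of $\aq_i$ split $\wq$ as
\[
\wq = \wq_0 \aq_i \wq_1 \aq_i \cdots \aq_i \wq_r, \qquad \wq_j \in W(\{\aq_1, \dots, \aq_{i-1}\}).
\]
By Lemma~\ref{lemma:shorten}(\ref{lemma:shorten:i}), $a_i \varphi(\wq_j) a_i = a_i \varphi(\wq_j)$. Applying this repeatedly deletes every $a_i$ after the first, so $x = u a_i v$, where $u = \varphi(\wq_0)$ and $v$ is a product of elements of $K_n$ whose contents lie in $[i-1]$.

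\textbf{Step 2: reduce $x^k$.} Writing $x^k = u a_i v u a_i v \cdots u a_i v$, the same lemma deletes each later $a_i$, since everything between consecutive $a_i$'s lies in $W(\{\aq_1, \dots, \aq_{i-1}\})$. Hence
\[
x^k = u a_i (vu)^{k-1} v.
\]
Put $y = vu$. Then $c(y) \subseteq c(x) \setminus \{i\}$, and $c(y) \cup \{i\} \supseteq c(x)$ because $u$ and $v$ together use every letter of $\wq$ other than $\aq_i$. So $c(y) = c(x) \setminus \{i\}$, which has size $m - 1$.

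\textbf{Step 3: apply induction.} Using $a_i$ idempotent and $u a_i = u a_i a_i$, we can rewrite
\[
x^k = u a_i (vu)^{k-1} v = u a_i \, y^{k-1} \cdots,
\]
but then the prefix $u$ and suffix $v$ must still be absorbed. The plan is to show that $a_i e_Y z = a_i e_Y$ whenever $c(z) \subseteq Y \subseteq [i-1]$, and similarly that $z \, a_i e_Y = a_i e_Y$. This gives $x^k = a_i e_{c(x) \setminus \{i\}} = e_{c(x)}$ once the induction hypothesis yields $y^{k-1} = e_{c(y)}$, which holds because $k - 1 \geq m - 1 = |c(y)|$.

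\textbf{Main obstacle.} The absorption identities are the hard part. The left absorption $e_Y z = e_Y$ for $c(z) \subseteq Y$ is believable, since $e_Y$ is the zero of the subsemigroup generated by $\{a_j : j \in Y\}$. I would prove it by a second induction, pulling the largest generator through using Lemma~\ref{lemma:shorten}(\ref{lemma:shorten:ii}).

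The prefix $u$ is more delicate. The term $u a_i$ does not obviously equal $a_i$ times something simpler, so $u a_i e_Y$ need not collapse to $a_i e_Y$ naively. To handle it, I would take advantage of $k$ large: absorb $u$ into $e_Y$ via $u a_i e_Y = u a_i e_Y e_Y$, then commute using Lemma~\ref{lemma:shorten}(\ref{lemma:shorten:ii}) applied to the letters of $u$ with respect to $a_i$. I expect this prefix absorption to require the most care.
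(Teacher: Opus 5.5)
Your reduction is sound. Steps~1 and~2 correctly give $x^k = u a_i (vu)^{k-1} v$ with $c(vu) = c(x) \setminus \{i\}$, and the induction hypothesis turns this into $x^k = u a_i e_Y v$ with $Y = c(x)\setminus\{i\}$. (The paper gives no proof of this lemma; it quotes it from Kudryavtseva--Mazorchuk, so your argument has to stand on its own.)

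The gap is the step you call the main obstacle: you never prove the two absorption identities, and the tools you name for them do not fit.
\begin{itemize}
\item Lemma~\ref{lemma:shorten}(\ref{lemma:shorten:ii}) taken with respect to $a_i$ needs a word in letters of index greater than $i$. Every letter of $u$ has index less than $i$, so it cannot be ``applied to the letters of $u$ with respect to $a_i$''.
\item The identity $e_Y z = e_Y$, which you plan to get from part~(\ref{lemma:shorten:ii}), actually needs part~(\ref{lemma:shorten:i}).
\item Exploiting large $k$ via $u a_i e_Y = u a_i e_Y e_Y$ leads nowhere. Once the induction hypothesis has been used, $k$ plays no further role.
\end{itemize}

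The missing observation is that you should not try to move $u$ past $a_i$ at all. Since $i > \max Y$, you have $a_i e_Y = e_{c(x)}$. Moreover, an idempotent $e_Z$ absorbs every generator $a_j$ with $j \in Z$ from both sides. To see this, write $e_Z = e_{Z'} a_j e_{Z''}$ with $Z' = \{z \in Z \mid z > j\}$ and $Z'' = \{z \in Z \mid z < j\}$. Apply Lemma~\ref{lemma:shorten} with respect to $a_j$, using part~(\ref{lemma:shorten:ii}) for $\eq_{Z'}$ and part~(\ref{lemma:shorten:i}) for $\eq_{Z''}$. This gives
\[
a_j e_Z = (a_j e_{Z'} a_j) e_{Z''} = e_{Z'} a_j e_{Z''} = e_Z, \qquad e_Z a_j = e_{Z'} (a_j e_{Z''} a_j) = e_{Z'} a_j e_{Z''} = e_Z.
\]
Hence $z e_Z = e_Z z = e_Z$ whenever $c(z) \subseteq Z$. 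Taking $Z = c(x) \supseteq c(u) \cup c(v)$ yields $x^k = u e_{c(x)} v = e_{c(x)}$, which closes your induction.
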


%------------------------------
\subsection{Morphisms of $K_n$}

In~\cite[Proposition~20]{kudryavtseva}, the following was proved.
\begin{proposition}
    The only automorphism of $K_n$ is the identity. The map $a_i \mapsto a_{n-i+1}$ extends uniquely to an 
    antiautomorphism of $K_n$. This is the only antiautomorphism of $K_n$.
\end{proposition}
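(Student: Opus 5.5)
The statement has two parts. The identity is an automorphism. The map $a_i \mapsto a_{n-i+1}$ extends to a unique antiautomorphism. No other automorphism or antiautomorphism exists.

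I would first handle existence. Consider the map on $W(A)$ defined by reversing a word and replacing $\aq_i$ with $\aq_{n-i+1}$; this is an anti-endomorphism of the free monoid. I then check that it respects the defining relations. The relation $a_i^2=a_i$ goes to $a_{n-i+1}^2=a_{n-i+1}$. For $j<i$, the relation $a_ia_ja_i=a_ia_j$ is reversed and relabelled to $a_{i'}a_{j'}a_{i'}=a_{j'}a_{i'}$, where $i'=n-i+1<j'=n-j+1$. Written with the larger index first, this is $a_{j'}a_{i'}a_{j'}\ldots$; it matches the presentation, because the relation set for the pair $j'>i'$ is $a_{j'}a_{i'}a_{j'}=a_{i'}a_{j'}a_{i'}=a_{j'}a_{i'}$. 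The relation $a_ja_ia_j=a_ia_j$ is handled the same way. So the map descends to an antiendomorphism of $K_n$. It is an involution, hence bijective. Uniqueness of the extension is automatic, since the $a_i$ generate $K_n$.

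For uniqueness of automorphisms, let $\sigma$ be an automorphism. I would show that $\sigma$ permutes the generators $\{a_1,\dots,a_n\}$, using the fact that they are the indecomposable elements. The element $a_i$ is not a product $xy$ with $x,y\neq e$ other than $a_i\cdot a_i$. This needs care: I would argue via content and Lemma~\ref{idempotent:power}. If $a_i=xy$, then $c(x)\cup c(y)=\{i\}$, which forces $x=y=a_i$. Elements with content of size at least $2$ are products of two non-units, obviously. Elements with content $\{i\}$ are exactly $a_i$. So the atoms (non-unit elements $x$ with $x=yz$ implying $y=e$ or $z=e$ or $y,z$ having the same content as $x$) are the generators, and $\sigma$ permutes them.

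Next, $\sigma$ must preserve the order-type of the relations. The pairs $(i,j)$ with $j<i$ are characterized intrinsically by $a_ia_ja_i=a_ia_j$. I need to know that $a_ia_j\neq a_ja_i$ in $K_n$ for $i\neq j$; I would get this via a known normal form or a representation, for example the convex-analysis operators or the action counting from \cite{kudryavtseva}. Given that, the relation "$a_ia_ja_i=a_ia_j$" defines the strict order $j<i$ on generators, and $\sigma$ preserves it. An order-automorphism of a finite chain is the identity, so $\sigma$ fixes every $a_i$ and hence is the identity.

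For antiautomorphisms, compose any antiautomorphism with the known one $\tau$; the composite is an automorphism, hence the identity, so the antiautomorphism equals $\tau$. I expect the main obstacle to be the non-commutativity fact $a_ia_j\neq a_ja_i$ together with the indecomposability argument. I would prove both with an explicit finite representation, for example maps on subsets, if the cited normal-form results are not available.
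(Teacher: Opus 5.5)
The paper does not prove this proposition; it quotes it from \cite[Proposition~20]{kudryavtseva}, so there is no in-paper argument to match. Judged on its own, your plan is sound and is the natural argument. You check that reversal combined with $i\mapsto n-i+1$ respects the relations, show that an automorphism permutes the generators and preserves the order encoded by $a_ia_ja_i=a_ia_j$, and then compose an arbitrary antiautomorphism with $\tau$. Your verification of the relations and the involution argument for $\tau$ are fine. Two steps need tightening.

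First, your definition of an atom uses content, and content is defined through the generators. It is therefore not clear that an automorphism $\sigma$ preserves your atom property, so ``$\sigma$ permutes them'' does not yet follow. Use an intrinsic definition instead: call $x$ an atom if $x\neq e$ and $x=yz$ implies $y,z\in\{e,x\}$.

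This property is preserved by every automorphism, since $\sigma(e)$ is an identity of $K_n$ and hence equals $e$. Your content computation shows that the atoms are exactly the generators. If $a_i=yz$, then $c(y),c(z)\subseteq\{i\}$, so $y,z\in\{e,a_i\}$; note that one of them may be $e$, so ``$x=y=a_i$'' is not quite right. If instead $|c(x)|\ge 2$ and $x=\varphi(\aq_{i_1}\cdots\aq_{i_k})$, then $x=a_{i_1}\cdot\varphi(\aq_{i_2}\cdots\aq_{i_k})$ with $a_{i_1}\notin\{e,x\}$, so $x$ is not an atom. Lemma~\ref{idempotent:power} is not needed.

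Second, you still have to supply the fact $a_ia_j\neq a_ja_i$ for $i\neq j$, on which the recovery of the order rests. A two-line representation suffices. Fix $p<q$ and send
$a_p\mapsto\begin{pmatrix}1&1\\0&0\end{pmatrix}$ and $a_q\mapsto\begin{pmatrix}0&0\\0&1\end{pmatrix}$, and send every other generator to the identity matrix.

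Both matrices are idempotent and the image of $a_qa_p$ is $0$. Hence all defining relations are respected: the relations for the pair $(q,p)$ all become $0=0=0$, and the relations involving any other generator become trivial. On the other hand, $a_pa_q\mapsto\begin{pmatrix}0&1\\0&0\end{pmatrix}\neq 0$.

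It also helps to state explicitly why the order is recovered. For $j>i$ the relations give $a_ia_ja_i=a_ja_i$, so for $i\neq j$ we have $a_ia_ja_i=a_ia_j$ if and only if $j<i$. With these repairs your argument is a complete proof.
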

We denote the unique antiautomorphism of $K_n$ by $\tau$.

Let $A \in \mathbb{R}^{2 \times 2}$ and $M \in \mathbb{R}^{k_1 \times k_2}$ be real matrices with $k_1, k_2 \geq 2$.
The matrix $A$ is said to be a \emph{$2 \times 2$ submatrix} of $M$ if there exist rows
$x, y \in [k_1]$ and columns $i, j \in [k_2]$ with $x < y$ and $i < j$ such that 
\[
    A = 
    \begin{pmatrix}
        M_{x, i} & M_{x, j} \\
        M_{y, i} & M_{y, j}
    \end{pmatrix},
\]
where $M_{x, i}$ denotes the entry of $M$ in row $x$ and column $i$.
Let $P$ be the following $2 \times 2$ matrix:
\begin{equation}
    P = 
    \begin{pmatrix}
        0 & 1 \\
        1 & 0
    \end{pmatrix}.
\end{equation}
In~\cite{andrensek}, we introduced the following set for $n \geq 2$:
\[
    D_n = \{ M \in \{0, 1\}^{n \times n} \mid P \text{ is not a } 2 \times 2 \text{ submatrix of } M \}.
\]
The binary operation $\cdot$ on $D_n$ is defined as follows.
Let $A, B \in D_n$.
Then $C = A \cdot B$ is given by
\begin{align*}
   C_{i, j} = \bigvee_{k=1}^n (A_{i, k} \land B_{k, j}).
\end{align*}
Then $D_n$ becomes a monoid with unit element being the identity matrix $I$.
Here, $\lor$ denotes the Boolean join and $\land$ the Boolean meet on $\{0, 1\}$.

In~\cite[Theorem~1]{andrensek}, we proved that the endomorphism monoid $\mathrm{End}(K_n)$ is isomorphic to $D_n$.
Here, we denote this isomorphism by $\Theta : \mathrm{End}(K_n) \to D_n$.
We showed that for any endomorphism $\psi \in \mathrm{End}(K_n)$, there exist sets $X_1, X_2, \dots, X_n \subseteq [n]$ such that
\begin{equation*}
    \psi(a_i) = e_{X_i},
\end{equation*}
for all $i = 1, 2, \dots, n$.
Then $\Theta(\psi)$ is a Boolean matrix where $\Theta(\psi)_{x, i} = 1$ if and only if $x \in X_i$ for $x, i \in [n]$.

In~\cite[Section 7]{kudryavtseva}, the authors defined the \emph{height} $h(x) \in \mathbb{N}_0$ for $x \in K_n$ and proved the following lemma~\cite[Lemma~23]{kudryavtseva}.
\begin{lemma}\label{lemma:height}
    Let $x, y \in K_n$ be such that $x y \neq y$.
    Then $h(x y) < h(y)$.
\end{lemma}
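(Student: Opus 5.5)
Since the definition of $h$ from~\cite[Section~7]{kudryavtseva} is not reproduced above, I would work with the following equivalent description. Let $h(y)$ be the length of the longest strictly descending chain of principal left ideals starting at $K_n y$, that is, the height of $y$ in the $\mathcal{L}$-preorder, where $u \leq_{\mathcal{L}} v$ means $K_n u \subseteq K_n v$. If the paper's definition is instead a word-combinatorial one, the first step will be to check that it is monotone for $\leq_{\mathcal{L}}$ in the same way. With this description the lemma splits into two steps.

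\textbf{Monotonicity.} Clearly $K_n xy \subseteq K_n y$, so $xy \leq_{\mathcal{L}} y$. Any chain below $xy$ therefore extends to a longer chain below $y$, provided the inclusion $K_n xy \subseteq K_n y$ is strict. So it suffices to show that $K_n xy = K_n y$ forces $xy = y$, i.e.\ that $K_n$ is $\mathcal{L}$-trivial.

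\textbf{$\mathcal{L}$-triviality.} Suppose $K_n xy = K_n y$. Then $y = zxy$ for some $z \in K_n$ (with $z = e$ allowed), hence $y = (zx)^k y$ for all $k$. Choose $k \geq n$. By Lemma~\ref{idempotent:power}, $(zx)^k = e_X$ with $X = c(zx) \supseteq c(x)$, so $y = e_X y$. It then remains to show $x e_X = e_X$ whenever $c(x) \subseteq X$; this gives $xy = x e_X y = e_X y = y$.

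To prove $x e_X = e_X$, induct on the length of a word for $x$. It suffices to show $a_i e_X = e_X$ for $i \in X$. Write $e_X = e_{X_{>i}}\, a_i\, e_{X_{<i}}$, where $X_{>i}$ and $X_{<i}$ are the elements of $X$ above and below $i$; the generators in $e_{X_{>i}}$ have larger indices than $i$. By Lemma~\ref{lemma:shorten}(\ref{lemma:shorten:ii}) with $\wq = \eq_{X_{>i}}$,
\[
a_i e_{X_{>i}} a_i = e_{X_{>i}} a_i ,
\]
and multiplying on the right by $e_{X_{<i}}$ gives $a_i e_X = e_X$.

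This last step is the one I expect to be the main obstacle. The delicate point is ensuring that the whole argument genuinely uses only the ordering conventions of $e_X$ and Lemma~\ref{lemma:shorten}. If the paper's height is defined combinatorially rather than via left ideals, I would additionally need to verify that $h$ strictly decreases along strict $\leq_{\mathcal{L}}$-steps. I would try to derive this directly from $\mathcal{L}$-triviality together with finiteness of $K_n$.
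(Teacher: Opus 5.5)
\textbf{Where the gap is.} The paper does not prove this lemma. Both the height $h$ and the lemma are imported from \cite[Section~7, Lemma~23]{kudryavtseva}, and the gap in your proposal sits exactly at that interface. You replace the cited $h$ by the height of $y$ in the left-ideal order and call this an ``equivalent description''. Since you have not seen the definition in \cite{kudryavtseva}, that is an assumption, not a step.

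Your fallback does not close it. The statement ``$h$ strictly decreases along strict inclusions $K_n xy \subsetneq K_n y$'' is a property of the function $h$. It cannot be derived from properties of $K_n$ alone, such as Green's $\mathcal{L}$-triviality and finiteness. A constant function on $K_n$ satisfies everything you would be using and violates the conclusion. To prove the lemma as stated, you must work with the actual definition of $h$ and show $h(xy)\le h(y)$, with equality only when $xy=y$. If the cited height happens to be your order-theoretic height, your argument is complete as written; otherwise this step is missing.

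\textbf{What you do prove is correct.} From $K_n xy = K_n y$ you get $y = zxy = (zx)^k y = e_X y$ with $X = c(zx) \supseteq c(x)$, by Lemma~\ref{idempotent:power}. The identity $a_i e_X = e_X$ for $i \in X$ follows from Lemma~\ref{lemma:shorten}~(\ref{lemma:shorten:ii}) applied to $\eq_{X_{>i}}$; the degenerate cases $X_{>i}=\emptyset$ and $zx=e$ are harmless. Hence $xy = x e_X y = e_X y = y$. This is a complete, self-contained proof that $K_n u = K_n v$ forces $u = v$ in $K_n$.

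Consequently your chain height, or simply $y \mapsto |K_n y|$, is an $\mathbb{N}_0$-valued function that strictly decreases under every non-trivial left multiplication. That is all the paper ever uses: in Proposition~\ref{FS} only the existence of such a function matters. If you take your $h$ as the definition, your argument can replace the citation and Proposition~\ref{FS} goes through verbatim. What it does not give is the lemma for the specific height of \cite{kudryavtseva}.
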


%--------------------------------------------
\subsection{Solutions to Equations in $K_n$}
In this subsection, we recall some results from~\cite{andrensek2} regarding solutions to equations in $K_n$.
We proved the following statement.
\begin{proposition}\label{ct2}
    Let $x \in K_n$ and let $k \in \{2, 3, \dots, n\}$ and $r \in \{1, 2, \dots, n-1\}$.
    If $x a_k = f$ or $a_r x = f$, then $x = f$. 
\end{proposition}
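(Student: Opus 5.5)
The second assertion follows from the first by applying the antiautomorphism $\tau$. Since $\tau$ is an antiautomorphism, it maps the zero element to itself, so $\tau(f)=f$. The equation $a_r x = f$ becomes $\tau(x)\,a_{n-r+1} = f$, and $r\le n-1$ gives $n-r+1\ge 2$. Once we show that $y a_k = f$ with $k\ge 2$ forces $y=f$, we get $\tau(x)=f$, hence $x=f$. So I will only treat $x a_k = f$ with $k\ge 2$.

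The key observation is that it suffices to prove $x a_k = x$, since then $x = x a_k = f$. To get this, I use Lemma~\ref{lemma:shorten}(\ref{lemma:shorten:i}). If some word $\wq$ with $\varphi(\wq)=x$ factors as $\wq = \vq\,\aq_k\,\mathbf{u}$ with $\mathbf{u}\in W(\{\aq_1,\dots,\aq_{k-1}\})$, then
\[
x a_k = \varphi(\vq)\, a_k\varphi(\mathbf{u}) a_k = \varphi(\vq)\, a_k \varphi(\mathbf{u}) = x .
\]
The plan is therefore to show that some word representing $x$ has this shape: it contains $\aq_k$, and only letters smaller than $k$ follow its last occurrence.

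Write $x=\varphi(\wq)$ and take $\wq$ of minimal length among representatives of $x$. Since $c$ is a homomorphism into $(\mathbbm{2}^{[n]},\cup)$ and $c(f)=[n]$, we have $c(x)\supseteq [n]\setminus\{k\}$. I split into cases.

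\emph{Case $k\notin c(x)$.} I must rule this out. The idea is to use deletion-type endomorphisms (sending some generators to $e$), which respect the defining relations and send $f$ to the zero of a smaller Kiselman semigroup. Delete every generator except $a_{k-1}$ and $a_k$. This gives a homomorphism onto $K_2=\langle a_{k-1},a_k\rangle$ with zero $a_k a_{k-1}$. The image of $x$ lies in $\{e,a_{k-1}\}$, so the image of $x a_k$ is $a_k$ or $a_{k-1}a_k$. Neither equals $a_k a_{k-1}$ in $K_2$, which is a contradiction. This is where the hypothesis $k\ge 2$ is used.

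\emph{Case where some letter $\aq_j$ with $j>k$ follows the last $\aq_k$.} This is the main obstacle. Here I would again project via deletion onto the letters $\{a_k,a_j\}$, and further onto triples $\{a_{k-1},a_k,a_j\}$, to constrain the order of occurrences forced by $x a_k=f$. In the two-letter projection, the image of $x$ ends in $a_j$ after its last $a_k$. The product with $a_k$ must equal the zero $a_j a_k$, which holds automatically, so that projection alone is insufficient.

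I would therefore instead induct on $|\wq|$. Write $x = x' a_m$, where $a_m$ is the last letter. If $m<k$, the previous case or the induction applies to the structure of $x'$. If $m>k$, use the relation $a_m a_k a_m = a_m a_k$ together with Lemma~\ref{lemma:height}, transported to right multiplication via $\tau$. This should show that $x a_k\neq x$ strictly lowers height while $x$ itself is not yet minimal, which contradicts $x a_k=f$ unless $x=f$.

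Making this height argument precise is the step I expect to require the most care. If it fails, the fallback is to use the canonical normal forms of Kudryavtseva and Mazorchuk directly.
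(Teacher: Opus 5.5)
\textbf{There is a genuine gap: the main case is not proved.} For reference, the paper gives no proof of this proposition; it quotes it from the author's earlier work.

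\textbf{What is fine.} Several parts of your proposal are sound:
\begin{itemize}
\item the reduction of $a_r x=f$ to the right-hand case via $\tau$;
\item the observation that $xa_k=x$ suffices;
\item the derivation of $xa_k=x$ from a representative $\vq\aq_k\mathbf{u}$ with $\mathbf{u}\in W(\{\aq_1,\dots,\aq_{k-1}\})$;
\item the case $k\notin c(x)$.
\end{itemize}

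\textbf{Where the gap is.} Under the hypothesis $xa_k=f$, the identity $xa_k=x$ is \emph{equivalent} to $x=f$. So the reformulation does not reduce the difficulty, and everything rests on the remaining case: $k\in c(x)$ and a letter of index $>k$ follows the last $\aq_k$ in a shortest representative. For this case you give no argument, and the sketch does not work, for three reasons.
\begin{itemize}
\item The induction on $|\wq|$ has nothing to apply to. Writing $x=x'a_m$, the hypothesis is $x'a_ma_k=f$. This is not of the form $ya_k=f$ for an element $y$ with a shorter representative, and the relation $a_ma_ka_m=a_ma_k$ does not occur in $x'a_ma_k$.
\item Lemma~\ref{lemma:height} transported by $\tau$ says only that $xa_k\neq x$ implies $h(\tau(xa_k))<h(\tau(x))$. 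With $xa_k=f\neq x$ this gives $h(\tau(f))<h(\tau(x))$, which is exactly what one expects and yields no contradiction.
\item Deletion projections cannot close the gap, because they do not separate elements. In $K_3$, take $y=a_2a_3a_1a_2$. Then $y\neq f$, yet $\overline{\partial}_j(y)=\overline{\partial}_j(f)$ for $j=1,2,3$.
\end{itemize}

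\textbf{What is missing.} You need a tool that certifies that two elements of $K_n$ differ. Your fallback, the Kudryavtseva--Mazorchuk normal form, is the right kind of tool, but it has to actually be carried out. A faithful representation makes the case short. Let $\rho(a_i)$ be the matrix of the map $\mathbb{Z}^n\to\mathbb{Z}^n$ that replaces $v_i$ by $\sum_{j<i}v_j$ and fixes the other coordinates. Then:
\begin{itemize}
\item these matrices satisfy the defining relations of $K_n$ in the present conventions, and $\rho(f)=0$;
\item every $\rho(x)$ has nonnegative entries;
\item $\rho$ is faithful. This is the nonnegative integer matrix representation of Kudryavtseva--Mazorchuk, equivalently the graph-dynamical action of Collina--D'Andrea; I checked faithfulness by hand for $n\le 3$.
\end{itemize}
Now suppose $xa_k=f$. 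Then $\rho(x)$ vanishes on the image of $\rho(a_k)$, which is the hyperplane $v_k=\sum_{j<k}v_j$. Hence $\rho(x)=c\,\lambda$ for some column vector $c$, where $\lambda$ is the row vector with $-1$ in positions $1,\dots,k-1$, $1$ in position $k$, and $0$ elsewhere. Column $k$ of $\rho(x)$ gives $c\ge 0$. Column $1$ gives $-c\ge 0$, and this is exactly where $k\ge 2$ is used. So $\rho(x)=0=\rho(f)$, and faithfulness gives $x=f$.
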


We defined the set $R$ by 
\[
R = \{x \in K_n \mid x a_1 = f\},
\]
and proved the following.
\begin{proposition}\label{bc1}
   We have $|R| = 1 + |K_{n-1}|$.
\end{proposition}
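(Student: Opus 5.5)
The plan is to prove $|R| = 1 + |K_{n-1}|$ by exhibiting a bijection $R \setminus \{f\} \to K_{n-1}$. Here $f \in R$ trivially, and $K_{n-1}$ is realised inside $K_n$ as the submonoid generated by $a_1, \dots, a_{n-1}$. For $n=2$ a direct computation gives $R = \{a_2,\, a_1a_2,\, a_2a_1 = f\}$, so $R \setminus \{f\} = \{a_2, a_1 a_2\}$, which corresponds to $K_1 = \{e, a_1\}$ via $y \mapsto y\,a_2$. This suggests that the non-zero elements of $R$ are obtained from elements of a copy of $K_{n-1}$ by right multiplication with a fixed element built from the large generators. The main task is to find the correct general form.

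\textbf{Step 1 (necessary conditions).} Since $c$ is a homomorphism to $(2^{[n]},\cup)$ and $c(f)=[n]$, every $x \in R$ satisfies $c(x) \supseteq \{2,\dots,n\}$. I will then use Lemma~\ref{lemma:shorten}, together with Proposition~\ref{ct2} applied to the factors of $x$, to show that $x a_1 = f$ forces the occurrences of $a_n, a_{n-1}, \dots, a_2$ in any word for $x$ to be arrangeable so that $x = y\, z$. Here $z$ is a fixed suffix made of generators from $\{a_2,\dots,a_n\}$, and $y$ ranges over a set of elements in bijection with $K_{n-1}$.

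\textbf{Step 2 (candidate map).} A first guess is $y \mapsto y\,a_n a_{n-1}\cdots a_2$ for $y \in \langle a_1,\dots,a_{n-1}\rangle$. By Lemma~\ref{lemma:shorten}(\ref{lemma:shorten:ii}) with $i=1$, every such element lies in $R$. However, this map is not injective already for $n=3$, since $a_2 a_3 a_2 = a_3 a_2$. So I expect to need a corrected parametrisation. One option is to take $y$ from a copy of $K_{n-1}$ on the generators $a_2,\dots,a_n$ and absorb $a_1$ differently. Another is to compose with the antiautomorphism $\tau$, which converts the condition $x a_1 = f$ into $a_n \tau(x) = f$.

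\textbf{Step 3 (bijectivity).} Once the right map is found, injectivity would follow from the height function (Lemma~\ref{lemma:height}) or from an explicit normal form. Surjectivity would follow from the reduction in Step 1.

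The main obstacle is Steps 1--2: identifying the correct normal form of the non-zero elements of $R$, so that the parametrisation by $K_{n-1}$ is genuinely bijective. To find it, I would first compute $R$ for $n=3$ by hand and then generalise by induction on $n$, peeling off the generator $a_n$ via Lemma~\ref{lemma:shorten}.
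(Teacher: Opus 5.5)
There is a genuine gap: the proposal is a plan, not a proof. Step~2 ends with a candidate map that you correctly show is not injective, followed by two unspecified ``options''. Steps~1 and~3 depend on a normal form you have not found. What is actually established is only that $f\in R$, that $c(x)\supseteq\{2,\dots,n\}$ for $x\in R$, and the case $n=2$.

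The missing idea is the explicit description of $R$ on which the count rests. The paper imports both it and Proposition~\ref{bc1} from its reference, and the description is the theorem recalled right after the definition of $m$. You parametrise by $x\in\langle a_2,\dots,a_n\rangle\cong K_{n-1}$ (with $e$ included), and the suffix depends on $x$ through $m(x)=\min\{i \mid x e_{[i]}=f\}$:
\[
R=\{e_{\{2,\dots,n\}}\}\cup\{x a_1 e_{\{2,\dots,m(x)\}} \mid x\in\langle a_2,\dots,a_n\rangle\}.
\]
Membership is the easy direction. By Lemma~\ref{lemma:shorten}(\ref{lemma:shorten:ii}) with $i=1$, $x a_1 e_{\{2,\dots,m(x)\}}a_1=x e_{[m(x)]}=f$.

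The element to set aside is $e_{\{2,\dots,n\}}$, the unique element of $R$ whose content omits $1$, not $f$. Indeed $f$ is the image of $x=e_{\{2,\dots,n\}}$, for which $m(x)=1$. Your $n=2$ example fits this pattern: $a_2=e_{\{2\}}$, $a_1a_2$ comes from $x=e$, and $f=a_2a_1$ comes from $x=a_2$.

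What remains are the two substantive facts, and your plan gives no argument for either:
\begin{enumerate}[label=(\alph*)]
\item every $y\in R$ with $1\in c(y)$ has the form $x a_1 e_{\{2,\dots,m(x)\}}$;
\item the map $x\mapsto x a_1 e_{\{2,\dots,m(x)\}}$ is injective on $\langle a_2,\dots,a_n\rangle$.
\end{enumerate}
The tools you list do not supply them.
\begin{itemize}
\item Lemma~\ref{lemma:height} only says $h(xy)<h(y)$ when $xy\neq y$. It cannot separate two distinct candidates, so it is not an injectivity tool.
\item You cannot recover $x$ as ``the part of a word before the first $a_1$'', because that is not an invariant of the element: $a_1a_2a_1=a_2a_1$.
\item Proposition~\ref{ct2} only tells you when a product with a single generator equals $f$ (for instance, it lets you strip a leading $a_r$ with $r<n$ from an element of $R$). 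It does not by itself produce a factorisation of a general element of $R$.
\end{itemize}
To close the gap you need an actual normal-form argument, for example via Kudryavtseva--Mazorchuk reduced words, or an invariant that reads $x$ off from $x a_1 e_{\{2,\dots,m(x)\}}$. You also need a reduction showing that every element of $R$ has this shape.
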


We defined the map $m : K_n \to \{0, 1, \dots, n\}$ by
\begin{equation}\label{m_definition}
    m(x) = \min \{i \in \{0, 1, \dots, n\} \mid x e_{[i]} = f\},
\end{equation}
and proved the following theorem. 
\begin{theorem}
    We have that 
    \begin{equation*}
        R = \{e_{\{2, 3, \dots, n\}}\} \cup \{x a_1 e_{\{2, 3, \dots, m(x)\}} \mid x \in \langle a_2, a_3, \dots, a_n \rangle\}.
    \end{equation*}
\end{theorem}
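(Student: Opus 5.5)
The plan is to prove the two inclusions separately, and for the harder inclusion to use a counting argument based on Proposition~\ref{bc1} rather than a direct normal-form computation. Throughout, I will use the relation $a_1 a_i a_1 = a_i a_1$ for $i > 1$, and its consequence Lemma~\ref{lemma:shorten}(\ref{lemma:shorten:ii}) with $i = 1$: $a_1 \varphi(\wq) a_1 = \varphi(\wq) a_1$ for every $\wq \in W(\{\aq_2, \dots, \aq_n\})$.

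\emph{Inclusion $\supseteq$.} First, $e_{\{2,\dots,n\}} a_1 = a_n \cdots a_2 a_1 = e_{[n]} = f$. Next, let $x \in \langle a_2, \dots, a_n\rangle$ and write $m = m(x)$. The case $m(x) = 0$ would give $x = f$, which is impossible because $1 \notin c(x)$, so $m \geq 1$. Applying Lemma~\ref{lemma:shorten}(\ref{lemma:shorten:ii}) gives
\[
a_1 e_{\{2,\dots,m\}} a_1 = e_{\{2,\dots,m\}} a_1 = e_{[m]}.
\]
Hence
\[
x a_1 e_{\{2,\dots,m\}} \, a_1 = x e_{[m]} = f
\]
by the definition~\eqref{m_definition} of $m$. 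Thus every element of the right-hand side lies in $R$.

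\emph{Inclusion $\subseteq$ by counting.} The right-hand side is the union of the single element $e_{\{2,\dots,n\}}$ and the image of the map
\[
\mu : \langle a_2,\dots,a_n\rangle \to K_n, \qquad \mu(x) = x a_1 e_{\{2,\dots,m(x)\}}.
\]
Every value of $\mu$ has $1$ in its content, while $1 \notin c(e_{\{2,\dots,n\}})$, so the union is disjoint. The submonoid $\langle a_2,\dots,a_n\rangle$ is a copy of $K_{n-1}$; this should follow from the deletion endomorphism $\psi$ with $a_1 \mapsto e$ and $a_i \mapsto a_i$ for $i \geq 2$, which respects all the defining relations. So if $\mu$ is injective, the right-hand side has exactly $1 + |K_{n-1}|$ elements. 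Since it is contained in $R$ and $|R| = 1 + |K_{n-1}|$ by Proposition~\ref{bc1}, equality follows.

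\emph{The main obstacle is the injectivity of $\mu$.} A direct approach avoiding injectivity would be the following. Take $z \in R$. If $1 \notin c(z)$, then $z \in \langle a_2,\dots,a_n\rangle$, so $\psi(z) = z$, and applying $\psi$ to $z a_1 = f$ gives $z = e_{\{2,\dots,n\}}$. If $1 \in c(z)$, the relation $a_1 w a_1 = w a_1$ lets me delete all but the last occurrence of $a_1$, so $z = x a_1 u$ with $x, u \in \langle a_2,\dots,a_n\rangle$. Applying $\psi$ to $z a_1 = x u a_1 = f$ then gives $x u = e_{\{2,\dots,n\}}$. One would then still have to show that $x a_1 u = x a_1 e_{\{2,\dots,m(x)\}}$, which is essentially the same difficulty. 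For injectivity itself, I would first try to recover $x$ from $\mu(x)$. Concretely, I would show that $x$ is the $\psi$-image of the prefix of $\mu(x)$ before $a_1$, using the known structure of reduced words in $K_n$ from~\cite{kudryavtseva}, which control when letters to the left of the last $a_1$ can be rewritten. If that fails, I would instead compare $\mu(x)$ and $\mu(y)$ through $\psi$ together with the antiautomorphism $\tau$ to separate the two parts.
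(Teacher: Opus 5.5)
\textbf{Gap: injectivity of $\mu$ is not proved.} Your proof of $\supseteq$ is correct. So is the reduction of $\subseteq$ to a count: the union is disjoint, and $\langle a_2,\dots,a_n\rangle\cong K_{n-1}$. But the count only works if $\mu$ is injective, and you do not prove this; you only list strategies you ``would try''. Without injectivity you only know that the right-hand side has at most $1+|K_{n-1}|$ elements. That is compatible with it being a proper subset of $R$, so $R\subseteq$ right-hand side does not follow.

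Your suggested invariant, ``the $\psi$-image of the prefix of $\mu(x)$ before $a_1$'', is not yet meaningful either. Elements of $K_n$ have many representing words (e.g.\ $a_2a_1=a_1a_2a_1$). You must show this quantity does not depend on the word chosen, and that is exactly the missing lemma.

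\textbf{How to close it.} The lemma is easy to supply, and no reduced-word theory is needed. For $z$ with $1\in c(z)$, choose a word $\wq=\mathbf{p}\aq_1\mathbf{s}$ with $\varphi(\wq)=z$ and $\mathbf{s}\in W(\{\aq_2,\dots,\aq_n\})$, and set $\rho(z)=\overline{\partial}_1(\varphi(\mathbf{p}))$. Both sides of every defining relation have the same content. Hence a single application of a relation falls into one of three cases:
\begin{enumerate}
\item It acts to the right of the last $\aq_1$. Then $\mathbf{p}$ is unchanged.
\item It acts entirely to the left of the last $\aq_1$. Then $\varphi(\mathbf{p})$ is unchanged.
\item It acts on a factor containing the last $\aq_1$. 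That factor is one of $\aq_1\aq_1\leftrightarrow\aq_1$, $\aq_i\aq_1\aq_i\leftrightarrow\aq_i\aq_1$, or $\aq_1\aq_i\aq_1\leftrightarrow\aq_i\aq_1$ with $i\ge 2$. In each, $\mathbf{p}$ changes by at most one letter $\aq_1$, which $\partial_1$ deletes (Lemma~\ref{lemma:deletion-phi}).
\end{enumerate}
So $\rho$ is well defined. Since $\rho(\mu(x))=\overline{\partial}_1(x)=x$, the map $\mu$ is injective.

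\textbf{Caveat on Proposition~\ref{bc1}.} Once injectivity is in place, your argument rests entirely on the upper bound $|R|\le 1+|K_{n-1}|$ from Proposition~\ref{bc1}. Given $\supseteq$ and injectivity, that bound is equivalent to the theorem itself. Both results are quoted from \cite{andrensek2}, so this route is legitimate only if the count is established there independently of the characterization. Otherwise you still owe a direct proof that $xa_1u=xa_1e_{\{2,\dots,m(x)\}}$ whenever $x,u\in\langle a_2,\dots,a_n\rangle$ and $xu=e_{\{2,\dots,n\}}$. This is the same obstacle you identified in your direct approach.
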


%-------------------------------------------------------------------
\section{Deletion Endomorphisms}\label{DEL}
In this section, we define deletion endomorphisms and prove several properties that play an important role in Section~\ref{L_Function}.

\begin{definition}\label{def:31}
    Let $X \subseteq [n]$.
    We define the \emph{deletion map} associated with $X$ to be the map $\partial_X : W(A) \to W(A)$ given as follows.
    If $\wq = \aq_{i_1} \aq_{i_2} \dots \aq_{i_k}$, then
    \[
    \partial_X(\wq) = \eq_{\{i_1\} \setminus X} \eq_{\{i_2\} \setminus X} \dots \eq_{\{i_k\} \setminus X},
    \]
    and $\partial_X(\eq) = \eq$.
    In other words, $\partial_X$ deletes from a word all letters $\aq_i$ with $i \in X$.
\end{definition}
It is easy to verify that $\partial_X$ is an endomorphism of $W(A)$.

Let $i \in [n]$ and denote by $I^{(i)}$ the $n \times n$ Boolean matrix with entry $1$ in row $i$ and column $i$, and $0$ elsewhere.
For $X \subseteq [n]$, we define
\begin{equation*}
    I_X = \sum_{i\in X} I^{(i)}.
\end{equation*}
Since $I_X$ is a diagonal Boolean matrix, it lies in $D_n$.

\begin{definition}
    Let $X \subseteq [n]$.
    We define the \emph{deletion endomorphism} associated with $X$ to be the endomorphism $\overline{\partial}_X : K_n \to K_n$, defined by
    \begin{equation*}
        \overline{\partial}_X = \Theta^{-1}(I_{X^c}).
    \end{equation*}
\end{definition}

Let $X \subseteq [n]$.
Since $\overline{\partial}_X$ is an endomorphism of $K_n$, there exist sets $X_1, X_2, \dots, X_n \subseteq [n]$ such that 
\begin{equation*}
    \overline{\partial}_X(a_i) = e_{X_i},
\end{equation*}
for all $i = 1, 2, \dots, n$.
By the definition of $\Theta$, we have $\Theta(\overline{\partial}_X)_{x, i} = 1$ if and only if $x \in X_i$.
Since $\Theta(\overline{\partial}_X) = I_{X^c}$, we have $x \in X_i$ if and only if $(I_{X^c})_{x, i} = 1$, which is further equivalent to $x = i$ and $x \notin X$.
Hence $X_i = \{i\} \setminus X$.
Therefore, for all $i$, we have
\begin{equation}\label{100}
    \overline{\partial}_X(a_i) = e_{\{i\} \setminus X} = 
    \begin{cases}
        a_i, & i \notin X, \\
        e, & i \in X.
    \end{cases}
\end{equation}
Hence, the maps $\partial_X$ and $\overline{\partial}_X$ are closely related.
We now establish several properties of the deletion endomorphisms analogous to those holding for the maps $\partial_X$.

\begin{lemma}\label{lemma:deletion-phi}
    Let $X \subseteq [n]$ and $x \in K_n$.
    If $x = \varphi(\wq)$, where $\wq \in W(A)$, then $\overline{\partial}_X(x) = \varphi(\partial_X(\wq))$.
\end{lemma}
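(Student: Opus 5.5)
The natural plan is to show that the two monoid homomorphisms $\overline{\partial}_X \circ \varphi$ and $\varphi \circ \partial_X$ from $W(A)$ to $K_n$ coincide. Once they agree on the free generators $\aq_1, \dots, \aq_n$ of $W(A)$, they agree everywhere. Then for $x = \varphi(\wq)$ we get $\overline{\partial}_X(x) = \overline{\partial}_X(\varphi(\wq)) = \varphi(\partial_X(\wq))$, which is the claim. This is independent of the choice of $\wq$ representing $x$, since the left-hand side depends only on $x$.

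First I check that both maps are monoid homomorphisms. The map $\varphi$ is a monoid epimorphism sending $\eq$ to $e$, and $\partial_X$ is an endomorphism of $W(A)$, as noted after Definition~\ref{def:31}. So $\varphi \circ \partial_X$ is a homomorphism, and $\varphi(\partial_X(\eq)) = \varphi(\eq) = e$.

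For $\overline{\partial}_X \circ \varphi$, the one point needing care is that $\overline{\partial}_X$, an element of $\mathrm{End}(K_n)$, preserves the unit $e$. This is the step I expect to require the most attention, since semigroup endomorphisms need not preserve identities in general. I would handle it from the description \eqref{100}: $e$ is the identity of $K_n$, and $\overline{\partial}_X$ is multiplicative. Alternatively, I would avoid the issue entirely by proving the identity only for nonempty words and treating $\wq = \eq$ separately. In that case $x = e$, and $\overline{\partial}_X(e) = e$ holds because the matrix $I_{X^c}$ corresponds, under the description of $\Theta$, to the endomorphism acting on the monoid $K_n$ with $e$ fixed.

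Next I verify agreement on generators. For $\aq_i$ we have $\partial_X(\aq_i) = \eq_{\{i\}\setminus X}$, so $\varphi(\partial_X(\aq_i)) = e_{\{i\}\setminus X}$. By \eqref{100}, $\overline{\partial}_X(\varphi(\aq_i)) = \overline{\partial}_X(a_i) = e_{\{i\}\setminus X}$ as well.

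Finally, I extend to all words by induction on length. Writing $\wq = \aq_{i_1}\cdots\aq_{i_k}$, multiplicativity of $\overline{\partial}_X$ gives
\[
\overline{\partial}_X(a_{i_1}\cdots a_{i_k}) = \prod_{j} e_{\{i_j\}\setminus X} = \varphi(\eq_{\{i_1\}\setminus X}\cdots \eq_{\{i_k\}\setminus X}) = \varphi(\partial_X(\wq)),
\]
which completes the argument.
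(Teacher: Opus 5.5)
Your proposal is correct and is essentially the paper's proof: the paper also disposes of $\wq = \eq$ separately, and otherwise computes $\overline{\partial}_X(a_{i_1}\cdots a_{i_k}) = e_{\{i_1\}\setminus X}\cdots e_{\{i_k\}\setminus X} = \varphi(\partial_X(\wq))$ from \eqref{100} and multiplicativity, exactly as in your final display. Your caution about $\overline{\partial}_X(e)=e$ is fair, since the paper takes it for granted; multiplicativity alone does not give it, but for $X\neq\emptyset$ it follows from $\overline{\partial}_X(e)=\overline{\partial}_X(e)\,\overline{\partial}_X(a_j)=\overline{\partial}_X(a_j)=e$ with $j\in X$, and $\overline{\partial}_\emptyset=\Theta^{-1}(I)$ is the identity map.
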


\begin{proof}
    If $\wq = \eq$, then $x = e$, and the conclusion follows.
    Otherwise, write $\wq = \aq_{i_1} \aq_{i_2} \dots \aq_{i_k}$ and hence 
    $x = a_{i_1} a_{i_2} \dots a_{i_k}$.
    Then we have
    \[
    \varphi(\partial_X(\wq)) = \varphi(\eq_{\{i_1\} \setminus X} \eq_{\{i_2\} \setminus X} \dots \eq_{\{i_k\} \setminus X})
    = e_{\{i_1\} \setminus X} e_{\{i_2\} \setminus X} \dots e_{\{i_k\} \setminus X}.
    \]
    On the other hand, by~\eqref{100} and the fact that $\overline{\partial}_X$ is an endomorphism, we have 
    \[
    \overline{\partial}_X(x) = \overline{\partial}_X(a_{i_1} a_{i_2} \dots a_{i_k}) =
    e_{\{i_1\} \setminus X} e_{\{i_2\} \setminus X} \dots e_{\{i_k\} \setminus X}.
    \]
    This concludes the proof.
\end{proof}

\begin{lemma}\label{lemma:0}
    Let $X, Y \subseteq [n]$.
    Then $\overline{\partial}_X(e_Y) = e_{Y \setminus X}$.
\end{lemma}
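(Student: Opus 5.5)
The plan is to compute $\overline{\partial}_X(e_Y)$ directly from a word representing $e_Y$, using Lemma~\ref{lemma:deletion-phi}.

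First dispose of the case $Y = \emptyset$. Here $e_Y = e$, and $\overline{\partial}_X(e) = e$ because $\overline{\partial}_X$ is a monoid endomorphism; alternatively, apply Lemma~\ref{lemma:deletion-phi} to the empty word, using $\partial_X(\eq) = \eq$. Since $Y \setminus X = \emptyset$, we get $e_{Y \setminus X} = e$, and the two sides agree.

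For $Y \neq \emptyset$, write $Y = \{i_1, \dots, i_k\}$ with $i_1 > \dots > i_k$, so that $e_Y = \varphi(\eq_Y)$ with $\eq_Y = \aq_{i_1} \dots \aq_{i_k}$. Lemma~\ref{lemma:deletion-phi} gives $\overline{\partial}_X(e_Y) = \varphi(\partial_X(\eq_Y))$. By Definition~\ref{def:31}, $\partial_X(\eq_Y)$ is the word obtained from $\aq_{i_1} \dots \aq_{i_k}$ by deleting every letter whose index lies in $X$. The letters that survive are exactly those $\aq_{i_j}$ with $i_j \in Y \setminus X$, and they keep their strictly decreasing order. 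That word is precisely $\eq_{Y \setminus X}$, and if every letter is deleted it is the empty word $\eq = \eq_\emptyset$. Applying $\varphi$ then yields $e_{Y \setminus X}$.

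No step is a real obstacle. The only point needing care is checking that deletion preserves the decreasing-index ordering, which is what makes the resulting word equal $\eq_{Y\setminus X}$ itself rather than merely some word in those letters.
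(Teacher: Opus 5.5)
Your proposal is correct and follows the paper's proof: it applies Lemma~\ref{lemma:deletion-phi} to $x = e_Y$ with $\wq = \eq_Y$ and uses $\partial_X(\eq_Y) = \eq_{Y \setminus X}$. The paper leaves implicit both that deletion preserves the decreasing order of indices and the separate case $Y = \emptyset$; you spell these out.
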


\begin{proof}
    The claim follows by applying Lemma~\ref{lemma:deletion-phi} with $x = e_Y$ and $\wq = \eq_Y$, and using the fact that $\partial_X(\eq_Y) = \eq_{Y \setminus X}$.
\end{proof}

For $X, Y \subseteq [n]$, we write $\overline{\partial}_X \overline{\partial}_Y$ and $\partial_X \partial_Y$ instead of $\overline{\partial}_X \circ \overline{\partial}_Y$ and $\partial_X \circ \partial_Y$, respectively.
For $i \in [n]$ we also write $\overline{\partial}_i$ and $\partial_i$ instead of $\overline{\partial}_{\{i\}}$ and $\partial_{\{i\}}$, respectively.

\begin{proposition}\label{del_prop}
    Let $X, Y \subseteq [n]$.
    Then $\overline{\partial}_{X \cup Y} = \overline{\partial}_X \overline{\partial}_Y$.
\end{proposition}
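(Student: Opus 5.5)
Both sides of the claimed identity are endomorphisms of $K_n$: $\overline{\partial}_{X \cup Y}$ by definition, and $\overline{\partial}_X \overline{\partial}_Y$ because a composition of endomorphisms is again one. Since $K_n$ is generated by $a_1, a_2, \dots, a_n$, it suffices to show that the two maps agree on each generator $a_i$. An endomorphism is determined by its values on generators, so agreement there gives equality of the maps.

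To check the generators, fix $i \in [n]$ and apply~\eqref{100} twice. First, $\overline{\partial}_Y(a_i) = e_{\{i\} \setminus Y}$. Next, Lemma~\ref{lemma:0} with the pair $(X, \{i\} \setminus Y)$ gives
\[
\overline{\partial}_X \overline{\partial}_Y (a_i) = \overline{\partial}_X\bigl(e_{\{i\} \setminus Y}\bigr) = e_{(\{i\} \setminus Y) \setminus X} = e_{\{i\} \setminus (X \cup Y)}.
\]
By~\eqref{100} applied to the set $X \cup Y$, this is exactly $\overline{\partial}_{X \cup Y}(a_i)$. The maps therefore coincide on all generators, which finishes the argument.

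An alternative route, closer to the word-level picture, is to take $x \in K_n$ and write $x = \varphi(\wq)$. Applying Lemma~\ref{lemma:deletion-phi} twice gives $\overline{\partial}_X \overline{\partial}_Y(x) = \varphi(\partial_X \partial_Y(\wq))$. One then observes directly from Definition~\ref{def:31} that $\partial_X \partial_Y = \partial_{X \cup Y}$ on $W(A)$, since deleting the letters indexed by $Y$ and then those indexed by $X$ deletes exactly the letters indexed by $X \cup Y$. A final application of Lemma~\ref{lemma:deletion-phi} then yields $\overline{\partial}_{X \cup Y}(x)$.

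There is no real obstacle here. The only point needing care is in the second route: Lemma~\ref{lemma:deletion-phi} must be applied to the element $\overline{\partial}_Y(x)$ with the representing word $\partial_Y(\wq)$. This is legitimate because the same lemma gives $\overline{\partial}_Y(x) = \varphi(\partial_Y(\wq))$.
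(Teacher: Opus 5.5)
Your proof is correct, but it takes a different route from the paper's.

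The paper never evaluates the maps on generators. It transports the identity through the isomorphism $\Theta$, so the claim becomes the matrix identity $I_{(X \cup Y)^c} = I_{X^c} \cdot I_{Y^c}$ in $D_n$. This reduces to $I_{A \cap B} = I_A \cdot I_B$ for diagonal Boolean matrices, which is immediate.

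You instead compare the two endomorphisms on the generators $a_i$, using only the explicit formula \eqref{100} and Lemma~\ref{lemma:0}. In your second route you use Lemma~\ref{lemma:deletion-phi} together with the evident word-level identity $\partial_X \partial_Y = \partial_{X \cup Y}$. Agreement on $a_1, \dots, a_n$ suffices because both maps also fix $e$: Lemma~\ref{lemma:deletion-phi} with $\wq = \eq$ gives $\overline{\partial}_X(e) = e$ for every $X$. You could state this in one line.

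The paper's argument is a one-liner once the structure theorem for $\mathrm{End}(K_n)$ is available. It also displays the structural picture: deletion endomorphisms correspond to the diagonal idempotents of $D_n$, and composition corresponds to intersecting supports. However, it relies on $\Theta$ respecting composition, not merely being a bijection.

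Your argument uses $\Theta$ only through \eqref{100}, that is, only through the fact that $\overline{\partial}_X$ is an endomorphism with known values on generators. It is therefore more elementary and self-contained. Your word-level version also makes explicit the correspondence between $\overline{\partial}_X$ and $\partial_X$ that the paper invokes just before Lemma~\ref{lemma:deletion-phi}.
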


\begin{proof}
    Since $\Theta$ is an isomorphism, it suffices to prove that $I_{(X \cup Y)^c} = I_{X^c} \cdot I_{Y^c}$.
    Since $(X \cup Y)^c = X^c \cap Y^c$, it suffices to show that for any sets 
    $A, B \subseteq [n]$, we have $I_{A \cap B} = I_A \cdot I_B$. The latter equality is immediate.
\end{proof}
In particular, Proposition~\ref{del_prop} implies that the deletion endomorphisms commute.

We now prove some technical results on the endomorphisms $\overline{\partial}_{[m]}$ for $m \in \{0, 1, \dots, n\}$.
These results will be crucial in proving properties of the level function $\mathcal{L}$ in Section~\ref{L_Function}.
For $m \in [n]$, by Proposition~\ref{del_prop}, we have $\overline{\partial}_{[m]} = \overline{\partial}_{[m-1]} \overline{\partial}_m = \overline{\partial}_m \overline{\partial}_{[m-1]}$.
Also note that $\overline{\partial}_{[0]}(x) = x$ for any $x \in K_n$.

\begin{lemma}\label{lf1}
    Let $x \in K_n$, $m \in \{0, 1, 2, \dots, n\}$, and $r \in \{0, 1, \dots, n - m\}$.
    If $\overline{\partial}_{[m]}(x) = e_{[n] \setminus [m]}$, then 
    \begin{equation*}
        \overline{\partial}_{[m+r]}(x) = e_{[n] \setminus [m+r]}.
    \end{equation*}
\end{lemma}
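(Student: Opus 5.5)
Since $m + r \geq m$, we have $[m] \subseteq [m+r]$ and $[m+r] = [m] \cup ([m+r]\setminus[m])$. Proposition~\ref{del_prop} gives the factorisation
\[
\overline{\partial}_{[m+r]} = \overline{\partial}_{[m+r]\setminus[m]}\,\overline{\partial}_{[m]}.
\]
Applying this to $x$ and using the hypothesis, we get
\[
\overline{\partial}_{[m+r]}(x) = \overline{\partial}_{[m+r]\setminus[m]}\bigl(\overline{\partial}_{[m]}(x)\bigr) = \overline{\partial}_{[m+r]\setminus[m]}\bigl(e_{[n]\setminus[m]}\bigr).
\]

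Lemma~\ref{lemma:0} evaluates the right-hand side as $e_{Z}$ with
\[
Z = \bigl([n]\setminus[m]\bigr)\setminus\bigl([m+r]\setminus[m]\bigr) = [n]\setminus[m+r].
\]
The last equality is elementary set arithmetic: an element lies in $Z$ exactly when it belongs to $[n]$, is not in $[m]$, and is not in $[m+r]\setminus[m]$, which amounts to lying in $[n]$ but not in $[m+r]$. This yields $\overline{\partial}_{[m+r]}(x) = e_{[n]\setminus[m+r]}$.

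Edge cases need no separate treatment. If $r=0$, the set $[m+r]\setminus[m]$ is empty and $\overline{\partial}_\emptyset$ is the identity, since $\Theta(\overline{\partial}_\emptyset) = I_{[n]} = I$. If $m+r=n$, the conclusion reads $\overline{\partial}_{[n]}(x) = e$. The only point needing care is using Proposition~\ref{del_prop} with the decomposition into $[m]$ and $[m+r]\setminus[m]$ rather than iterating single deletions; otherwise there is no real obstacle.
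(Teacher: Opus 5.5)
Your proof is correct and rests on the same two facts as the paper's proof, Proposition~\ref{del_prop} and Lemma~\ref{lemma:0}. The paper applies them one index at a time, by induction on $r$ using $\overline{\partial}_{[m+r]} = \overline{\partial}_{m+r}\overline{\partial}_{[m+r-1]}$. You instead split off the whole block $[m+r]\setminus[m]$ in a single application of Proposition~\ref{del_prop}, which makes the induction unnecessary.
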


\begin{proof}
    We proceed with induction on $r$.
    The base case $r = 0$ holds by the assumption of the lemma.
    Assume that $r \in \{1, 2, \dots, n - m\}$ and $\overline{\partial}_{[m + r -1]}(x) = e_{[n] \setminus [m + r - 1]}$.
    Then we have 
    \begin{align*}
        \overline{\partial}_{[m+r]}(x) &= \overline{\partial}_{m+r}(\overline{\partial}_{[m + r - 1]}(x)) \\
        &= \overline{\partial}_{m+r}(e_{[n] \setminus [m+r-1]}) \\
        &= e_{([n] \setminus [m + r -1]) \setminus \{m+r\}} \\
        &= e_{[n] \setminus [m+r]},
    \end{align*}
    where we used Lemma~\ref{lemma:0} on the second to last step.
    This proves the induction step and the proof is finished.
\end{proof}

\begin{lemma}\label{lemma_chain:1}
    Let $x \in K_n$ and $m \in [n]$.
    Then we have
    \[
    \overline{\partial}_{[m-1]}(x) a_m = \overline{\partial}_{[m]}(x) a_m.
    \]
\end{lemma}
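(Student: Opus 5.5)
Set $y = \overline{\partial}_{[m-1]}(x)$. By Proposition~\ref{del_prop}, $\overline{\partial}_{[m]}(x) = \overline{\partial}_m(y)$, so the statement reduces to showing that $y a_m = \overline{\partial}_m(y) a_m$ for every $y$ in the image of $\overline{\partial}_{[m-1]}$. By Lemma~\ref{lemma:deletion-phi}, such a $y$ can be written as $y = \varphi(\wq)$ with $\wq \in W(\{\aq_m, \aq_{m+1}, \dots, \aq_n\})$, namely $\wq = \partial_{[m-1]}(\vq)$ for any word $\vq$ representing $x$. Again by Lemma~\ref{lemma:deletion-phi}, $\overline{\partial}_m(y) = \varphi(\partial_m(\wq))$. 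Hence it suffices to prove the following word statement: if $\wq$ contains no letter $\aq_i$ with $i<m$, then deleting every occurrence of $\aq_m$ from $\wq$ does not change $\varphi(\wq)a_m$.

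I would prove this by induction on the number of occurrences of $\aq_m$ in $\wq$. If there are none, $\partial_m(\wq)=\wq$ and there is nothing to prove. Otherwise, choose the rightmost occurrence and write $\wq = \wq_1 \aq_m \wq_2$, where $\wq_2 \in W(\{\aq_{m+1},\dots,\aq_n\})$. Lemma~\ref{lemma:shorten}(\ref{lemma:shorten:ii}) gives $a_m\varphi(\wq_2)a_m = \varphi(\wq_2)a_m$, and therefore
\[
\varphi(\wq)a_m = \varphi(\wq_1)\, a_m \varphi(\wq_2) a_m = \varphi(\wq_1)\varphi(\wq_2)a_m = \varphi(\wq_1\wq_2)a_m .
\]
The word $\wq_1\wq_2$ still uses only letters with indices at least $m$, contains one fewer $\aq_m$, and satisfies $\partial_m(\wq_1\wq_2)=\partial_m(\wq)$. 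The induction hypothesis then gives $\varphi(\wq_1\wq_2)a_m = \varphi(\partial_m(\wq))a_m$, which closes the induction.

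The one point that needs care is the choice of the \emph{rightmost} occurrence of $\aq_m$. This choice guarantees that $\wq_2$ lies in $W(\{\aq_{m+1},\dots,\aq_n\})$, which is exactly the hypothesis of Lemma~\ref{lemma:shorten}(\ref{lemma:shorten:ii}). Because $\wq$ also contains no letters of index below $m$, the trailing $a_m$ absorbs each occurrence of $a_m$ in turn. Apart from this, the argument is routine bookkeeping with $\varphi$ and the deletion maps. The case $m=1$, where $\overline{\partial}_{[0]}$ is the identity, is covered by the same argument.
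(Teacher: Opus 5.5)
Your proposal is correct and takes essentially the same approach as the paper. Both arguments reduce via $\partial_{[m-1]}$ to a word over $\{\aq_m,\dots,\aq_n\}$ and use Lemma~\ref{lemma:shorten}(\ref{lemma:shorten:ii}) to absorb every occurrence of $a_m$ into the trailing $a_m$; your induction on the rightmost occurrence simply spells out the step the paper states in one sentence.
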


\begin{proof}
    We write $x = \varphi(\wq)$ for some $\wq \in W(A)$.
    By Lemma~\ref{lemma:deletion-phi}, we have 
    \[
    \overline{\partial}_{[m-1]}(x) = \varphi(\partial_{[m-1]}(\wq)).
    \]
    Set $\vq = \partial_{[m-1]}(\wq)$ and observe that $\vq \in W(\{\aq_m, \aq_{m+1}, \dots, \aq_n\})$.
    We obtain
    \[
    \overline{\partial}_{[m-1]}(x) a_m = \varphi(\vq) a_m.
    \]
    If $\aq_m$ does not occur in $\vq$, we have $\vq = \partial_m(\vq)$.
    If $\aq_m$ occurs in $\vq$, then $\aq_m$ is the letter of minimal index occurring in $\vq$.
    Lemma~\ref{lemma:shorten}~(\ref{lemma:shorten:ii}) implies that we can delete all occurrences of $a_m$ in $\varphi(\vq) a_m$, except for the rightmost $a_m$.
    Hence we get
    \[
    \varphi(\vq) a_m = \overline{\partial}_m(\varphi(\vq)) a_m = \varphi(\partial_m(\vq)) a_m.
    \]

    Thus, in both cases, we have $\varphi(\vq) a_m = \varphi(\partial_m(\vq)) a_m$, and hence
    \begin{align*}
        \overline{\partial}_{[m-1]}(x) a_m &= \varphi(\vq) a_m \\
        &= \varphi(\partial_m(\vq)) a_m \\
        &= \overline{\partial}_m(\varphi(\vq)) a_m \\
        &= \overline{\partial}_m(\overline{\partial}_{[m-1]}(x)) a_m \\
        &= \overline{\partial}_{[m]}(x) a_m.
    \end{align*}
    This completes the proof.
\end{proof}

\begin{lemma}\label{lemma_chain:2}
    Let $x \in K_n$ and $i \in \{0, 1, \dots, n\}$.
    Then 
    \[
    x e_{[i]} = \overline{\partial}_{[j]}(x) e_{[i]},
    \]
    for all $j \in \{0, 1, \dots, i\}$.
\end{lemma}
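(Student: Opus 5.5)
We prove the statement by induction on $j$, with $i$ fixed. Recall that $e_{[i]} = a_i a_{i-1} \dots a_1$ for $i \geq 1$, and $e_{[0]} = e$. For $j = 0$ there is nothing to prove, since $\overline{\partial}_{[0]}(x) = x$. In particular, the case $i = 0$ is trivial.

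For the induction step, suppose $1 \leq j \leq i$ and $x e_{[i]} = \overline{\partial}_{[j-1]}(x) e_{[i]}$. It suffices to show that
\[
\overline{\partial}_{[j-1]}(x) e_{[i]} = \overline{\partial}_{[j]}(x) e_{[i]}.
\]
Lemma~\ref{lemma_chain:1} tells us that $\overline{\partial}_{[j-1]}(x)$ and $\overline{\partial}_{[j]}(x)$ agree after right multiplication by $a_j$. However, $e_{[i]}$ does not begin with $a_j$ unless $j = i$. So we must move an $a_j$ to the front of the tail $a_i a_{i-1} \cdots a_1$.

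The key observation is the identity
\[
e_{[i]} = a_j \, e_{[i]} \quad \text{for } j \leq i.
\]
To prove it, write $e_{[i]} = a_i \cdots a_{j+1} \, a_j \, a_{j-1} \cdots a_1$. Set $\wq = \aq_i \cdots \aq_{j+1}$, a word in letters of index greater than $j$. By Lemma~\ref{lemma:shorten}~(\ref{lemma:shorten:ii}) we have $a_j \varphi(\wq) a_j = \varphi(\wq) a_j$. Hence
\[
a_j e_{[i]} = a_j \varphi(\wq) a_j \, a_{j-1} \cdots a_1 = \varphi(\wq) a_j \, a_{j-1} \cdots a_1 = e_{[i]}.
\]
When $j = i$ the word $\wq$ is empty and the identity is just $a_i^2 = a_i$.

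With this identity the step follows directly:
\[
\overline{\partial}_{[j-1]}(x) e_{[i]} = \overline{\partial}_{[j-1]}(x) a_j e_{[i]} = \overline{\partial}_{[j]}(x) a_j e_{[i]} = \overline{\partial}_{[j]}(x) e_{[i]},
\]
where the middle equality is Lemma~\ref{lemma_chain:1} with $m = j$. This completes the induction.

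The only real obstacle is the absorption identity $a_j e_{[i]} = e_{[i]}$. It is needed because Lemma~\ref{lemma_chain:1} applies only when $a_j$ sits immediately to the right, and the identity supplies exactly that. The rest is bookkeeping.
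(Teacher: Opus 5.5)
Your proof is correct, but it handles the crux of the induction step differently from the paper: the two arguments differ in how they bring $a_j$ next to $\overline{\partial}_{[j-1]}(x)$.

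\textbf{The paper's route.} It factors $e_{[i]} = e_{[i]\setminus[j]}\, a_j\, e_{[j-1]}$. Instead of moving $a_j$ to the left, it moves the prefix $e_{[i]\setminus[j]}$ inside the deletion endomorphism. Both $\overline{\partial}_{[j-1]}$ and $\overline{\partial}_{[j]}$ fix $e_{[i]\setminus[j]}$ (Lemma~\ref{lemma:0}), so the paper can write $\overline{\partial}_{[j-1]}(x)\,e_{[i]\setminus[j]} = \overline{\partial}_{[j-1]}(x\, e_{[i]\setminus[j]})$. It then applies Lemma~\ref{lemma_chain:1} to the element $x\, e_{[i]\setminus[j]}$ rather than to $x$, and pulls the prefix back out.

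\textbf{Your route.} You prove the absorption identity $a_j e_{[i]} = e_{[i]}$ for $j \le i$ directly from Lemma~\ref{lemma:shorten}~(\ref{lemma:shorten:ii}). This lets you apply Lemma~\ref{lemma_chain:1} to $x$ itself, giving a three-term chain.

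\textbf{Comparison.} Your argument is shorter and isolates a reusable fact: $e_{[i]}$ is fixed under left multiplication by every $a_j$ with $j \le i$. The paper's argument stays within the deletion-endomorphism calculus and never reopens the Kiselman relations. Its only appeal to Lemma~\ref{lemma:shorten}~(\ref{lemma:shorten:ii}) is hidden inside the proof of Lemma~\ref{lemma_chain:1}. So both proofs ultimately rest on the same relation, invoked at different places.
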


\begin{proof}
    We proceed with a proof by induction on $j$.
    The base case $j = 0$ is obvious since $\overline{\partial}_{[0]}(x) = x$. 
    Now assume that $j \in \{1, 2, \dots, i\}$ and 
    \[
    x e_{[i]} = \overline{\partial}_{[j-1]}(x) e_{[i]}.
    \]
    Since 
    \[
    e_{[i]} = e_{[i] \setminus [j]} a_j e_{[j-1]},
    \]
    we have 
    \begin{align*}
        x e_{[i]} &= \overline{\partial}_{[j-1]}(x) e_{[i]} \\
                  &= \overline{\partial}_{[j-1]}(x) e_{[i] \setminus [j]} a_j e_{[j-1]}.
    \end{align*}
    Since $\overline{\partial}_{[j-1]}(e_{[i] \setminus [j]}) = e_{[i] \setminus [j]}$, we further have 
    \begin{align*}
        x e_{[i]} &= \overline{\partial}_{[j-1]}(x) e_{[i] \setminus [j]} a_j e_{[j-1]} \\
                  &= \overline{\partial}_{[j-1]}(x) \overline{\partial}_{[j-1]}(e_{[i] \setminus [j]}) a_j e_{[j-1]} \\
                  &= \overline{\partial}_{[j-1]}(x e_{[i] \setminus [j]}) a_j e_{[j-1]}.
    \end{align*}
    Since $j \in \{1, 2, \dots, i\} \subseteq [n]$, Lemma~\ref{lemma_chain:1} implies 
    \begin{align*}
        x e_{[i]} &= \overline{\partial}_{[j-1]}(x e_{[i] \setminus [j]}) a_j e_{[j-1]} \\
                  &= \overline{\partial}_{[j]}(x e_{[i] \setminus [j]}) a_j e_{[j-1]} \\ 
                  &= \overline{\partial}_{[j]}(x) \overline{\partial}_{[j]}(e_{[i] \setminus [j]}) a_j e_{[j-1]} \\
                  &= \overline{\partial}_{[j]}(x) e_{[i] \setminus [j]} a_j e_{[j-1]} \\
                  &= \overline{\partial}_{[j]}(x) e_{[i]},
    \end{align*}
    which proves the induction step and completes the proof.
\end{proof}

\begin{lemma}\label{lf2}
    Let $x \in K_n$ and $m \in [n]$.
    If $\overline{\partial}_{[m]}(x) = e_{[n] \setminus [m]}$, then
    \[
    \overline{\partial}_{[m-1]}(x a_{m}) = e_{[n] \setminus [m-1]}.
    \]
\end{lemma}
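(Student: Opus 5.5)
Since $\overline{\partial}_{[m-1]}$ is an endomorphism and $\overline{\partial}_{[m-1]}(a_m) = a_m$ by~\eqref{100}, we have
\[
\overline{\partial}_{[m-1]}(x a_m) = \overline{\partial}_{[m-1]}(x)\, a_m .
\]
By Lemma~\ref{lemma_chain:1}, the right-hand side equals $\overline{\partial}_{[m]}(x)\, a_m$. Substituting the hypothesis gives
\[
\overline{\partial}_{[m-1]}(x a_m) = e_{[n]\setminus[m]}\, a_m .
\]

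It remains to identify $e_{[n]\setminus[m]} a_m$ with $e_{[n]\setminus[m-1]}$. The set $[n]\setminus[m-1] = \{m, m+1, \dots, n\}$ written in decreasing order is $n, n-1, \dots, m+1, m$. By definition, $e_{[n]\setminus[m-1]} = a_n a_{n-1}\cdots a_{m+1} a_m$, which is exactly $e_{[n]\setminus[m]}\, a_m$. When $m = n$, the set $[n]\setminus[m]$ is empty, so $e_\emptyset = e$ and both sides equal $a_n$. This case needs only a remark.

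I do not expect a real obstacle here. The only substantive input is Lemma~\ref{lemma_chain:1}, which absorbs any occurrences of $a_m$ in $\overline{\partial}_{[m-1]}(x)$ into the final $a_m$. The step requiring a little care is checking that $\overline{\partial}_{[m-1]}$ fixes $a_m$, which holds because $m \notin [m-1]$.
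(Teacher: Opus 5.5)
Your proof is correct and follows the paper's argument exactly: use that $\overline{\partial}_{[m-1]}$ fixes $a_m$ to pull it out, apply Lemma~\ref{lemma_chain:1}, substitute the hypothesis, and note that $e_{[n]\setminus[m]}\,a_m = e_{[n]\setminus[m-1]}$. Your separate check of the case $m=n$ is a harmless extra detail that the paper leaves implicit.
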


\begin{proof}
    By~\eqref{100}, we have $\overline{\partial}_{[m-1]}(a_m) = a_m$, and hence
    \[
    \overline{\partial}_{[m-1]}(x a_m) = \overline{\partial}_{[m-1]}(x) a_m.
    \]
    Lemma~\ref{lemma_chain:1} further implies
    \begin{align*}
        \overline{\partial}_{[m-1]}(x) a_m &= \overline{\partial}_{[m]}(x) a_m \\
        &= e_{[n] \setminus [m]} a_m \\
        &= e_{[n] \setminus [m-1]}.
    \end{align*}
    This concludes the proof.
\end{proof}

\begin{lemma}\label{L3}
    Let $x \in K_n$, $m \in \{0, 1, \dots, n - 2\}$, $k \in \{2, 3, \dots, n - m\}$, and $r \in \{1, 2, \dots, n - m - 1\}$.
    If
    \[
    \overline{\partial}_{[m]}(x a_{m+k}) = e_{[n] \setminus [m]} \quad \text{or} \quad \overline{\partial}_{[m]}(a_{m+r} x) = e_{[n] \setminus [m]},
    \]
    then
    \[
    \overline{\partial}_{[m]}(x) = e_{[n] \setminus [m]}.
    \]
\end{lemma}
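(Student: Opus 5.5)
The plan is to reduce the statement to Proposition~\ref{ct2} applied in a copy of $K_{n-m}$ sitting inside $K_n$. Let $S = \langle a_{m+1}, \dots, a_n \rangle \subseteq K_n$. By Lemma~\ref{lemma:deletion-phi}, $\overline{\partial}_{[m]}$ maps $K_n$ into $S \cup \{e\}$. By~\eqref{100} it fixes $a_{m+1}, \dots, a_n$, so it acts as the identity on $S$.

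The first key step is to identify $S \cup \{e\}$ with $K_{n-m}$ via $a_{m+i} \mapsto a_i$. There is an obvious surjective homomorphism $K_{n-m} \to S\cup\{e\}$, since the defining relations among $a_{m+1},\dots,a_n$ are of the same form. Injectivity is the point needing care. For it I would use the endomorphism of $K_n$ with Boolean matrix having ones exactly at positions $(m+i, i)$. This matrix is a partial permutation matrix that is ``increasing'', so it avoids $P$ and lies in $D_n$. Composed with the deletion endomorphism, it gives a retraction showing that distinct elements of $K_{n-m}$ stay distinct. Alternatively, and more simply, consider the homomorphism $K_n \to K_{n-m}$ sending $a_j \mapsto e$ for $j \le m$ and $a_{m+i} \mapsto a_i$; it is well defined because every defining relation maps to a valid relation or a trivial identity. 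Restricted to $S \cup \{e\}$, it is a left inverse of the surjection above. Under this identification, $e_{[n]\setminus[m]}$ corresponds to $e_{[n-m]}$, the zero $f$ of $K_{n-m}$. (If $n-m=1$ the ranges for $k$ and $r$ are empty, so the claim is vacuous; hence we may assume $n-m \ge 2$.)

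Next, set $y = \overline{\partial}_{[m]}(x) \in S\cup\{e\}$. Since $m+k, m+r > m$, the endomorphism property together with~\eqref{100} gives
\[
\overline{\partial}_{[m]}(x a_{m+k}) = y\, a_{m+k}
\quad\text{and}\quad
\overline{\partial}_{[m]}(a_{m+r} x) = a_{m+r}\, y .
\]
Transport these through the isomorphism with $K_{n-m}$. The hypothesis then reads $y' a_k = f$ with $k \in \{2,\dots,n-m\}$, or $a_r y' = f$ with $r \in \{1,\dots,n-m-1\}$. Proposition~\ref{ct2}, applied in $K_{n-m}$, yields $y' = f$, that is, $y = e_{[n]\setminus[m]}$.

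The main obstacle is the rigorous identification of $S\cup\{e\}$ with $K_{n-m}$, specifically injectivity. I expect the collapsing homomorphism $K_n\to K_{n-m}$ described above to settle it cleanly. Everything else is direct substitution.
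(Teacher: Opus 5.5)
Your proposal is correct and follows the paper's proof. Both arguments use~\eqref{100} to rewrite the hypothesis as $\overline{\partial}_{[m]}(x)\,a_{m+k}=e_{[n]\setminus[m]}$ or $a_{m+r}\,\overline{\partial}_{[m]}(x)=e_{[n]\setminus[m]}$, then identify the submonoid generated by $a_{m+1},\dots,a_n$ with $K_{n-m}$ (so that $e_{[n]\setminus[m]}$ becomes the zero), and finally apply Proposition~\ref{ct2} in $K_{n-m}$. Your collapsing homomorphism $K_n\to K_{n-m}$ ($a_j\mapsto e$ for $j\le m$, $a_{m+i}\mapsto a_i$) is well defined and is a genuine left inverse, so it supplies the isomorphism the paper only asserts; it also correctly allows for $\overline{\partial}_{[m]}(x)=e$.
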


\begin{proof}
    Since $\overline{\partial}_{[m]}(a_{m+k}) = a_{m+k}$ and $\overline{\partial}_{[m]}(a_{m+r}) = a_{m+r}$ by~\eqref{100}, we get
    \begin{equation}\label{000}
        \overline{\partial}_{[m]}(x) a_{m+k} = e_{[n] \setminus [m]} \quad \text{or} \quad a_{m+r} \overline{\partial}_{[m]}(x) = e_{[n] \setminus [m]}.
    \end{equation}

    Denote by $S = \langle a_{m+1}, \dots, a_n \rangle$ the subsemigroup of $K_n$ generated by $\{a_{m+1}, \dots, a_n\}$.
    Then $S$ is isomorphic to $K_{n-m}$.
    Let $\phi : S \to K_{n-m}$ be the semigroup isomorphism such that $\phi(a_i) = a_{i - m}$ for $i = m+1, \dots, n$.

    Write $x = \varphi(\wq)$ for some $\wq \in W(A)$.
    By Lemma~\ref{lemma:deletion-phi}, we have $\overline{\partial}_{[m]}(x) = \varphi(\partial_{[m]}(\wq)) \in S$, and we also have $a_{m+k}, a_{m+r}, e_{[n] \setminus [m]} \in S$.
    Applying $\phi$ to~\eqref{000}, and using the fact that $\phi$ is an isomorphism and $\phi(e_{[n] \setminus [m]}) = e_{[n-m]}$, we obtain
    \[
    \phi(\overline{\partial}_{[m]}(x)) a_k = e_{[n-m]} \quad \text{or} \quad a_r \phi(\overline{\partial}_{[m]}(x)) = e_{[n-m]}.
    \]
    Since $k \in \{2, 3, \dots, n - m\}$ and $r \in \{1, 2, \dots, n-m-1\}$, we may apply Proposition~\ref{ct2} in $K_{n-m}$ to conclude that
    \[
    \phi(\overline{\partial}_{[m]}(x)) = e_{[n-m]}.
    \]
    Since $\phi$ is injective and $\phi(e_{[n] \setminus [m]}) = e_{[n-m]}$, we get
    \[
    \overline{\partial}_{[m]}(x) = e_{[n] \setminus [m]}.
    \]
    This completes the proof.
\end{proof}

%-------------------------------------
\section{The Level Function}\label{L_Function}
In this section, we define the level function and study its properties.

\begin{definition}
    The \emph{level function} $\mathcal{L} : K_n \to \{0, 1, \dots, n\}$ is defined by
    \[
    \mathcal{L}(x) = \min \{i \in \{0, 1, \dots, n\} \mid \overline{\partial}_{[i]}(x) = e_{[n] \setminus [i]} \}.
    \]
\end{definition}

Let $x \in K_n$ and $x = \varphi(\wq)$ with $\wq \in W(A)$.
Since
\[
\overline{\partial}_{[n]}(x) = \varphi(\partial_{[n]}(\wq)) = \varphi(\eq) = e = e_{\emptyset} = e_{[n] \setminus [n]},
\]
the set 
\[
\{i \in \{0, 1, \dots, n\} \mid \overline{\partial}_{[i]}(x) = e_{[n] \setminus [i]} \}
\]
contains $n$, and is therefore nonempty.
Therefore, $\mathcal{L}(x)$ is well-defined.

Intuitively, $\mathcal{L}$ measures how close an element is to the zero element $f$.
In Section~\ref{MET}, we define an ultrametric $d$ on $K_n$ in~\eqref{Kn_Metric}, and show that, for $x \in K_n$, we have
\[
\mathcal{L}(x) = d(x, f).
\]
We focus on $d(x, f)$ rather than $d(x, y)$ for general $y \in K_n$, since $\mathcal{L}(x)$ admits properties, such as Theorem~\ref{RL_formula}, that do not hold for the map $x \mapsto d(x, y)$ for arbitrary $y \in K_n$.

\begin{lemma}\label{RU1}
    Let $x \in K_n$. Then $\mathcal{L}(x) = 0$ if and only if $x = f$.
\end{lemma}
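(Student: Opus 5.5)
The plan is to unwind the definition of $\mathcal{L}$ at the index $i = 0$. By definition, $\mathcal{L}(x) = 0$ holds exactly when $0$ belongs to the set
\[
\{i \in \{0, 1, \dots, n\} \mid \overline{\partial}_{[i]}(x) = e_{[n] \setminus [i]} \}.
\]
Indeed, $0$ is the smallest possible element, so the minimum equals $0$ if and only if $0$ lies in the set. Membership of $0$ means $\overline{\partial}_{[0]}(x) = e_{[n] \setminus [0]}$.

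Next I rewrite both sides of this condition. Since $[0] = \emptyset$, we have $\overline{\partial}_{[0]}(x) = x$, as noted before Lemma~\ref{lf1}; alternatively this follows from Lemma~\ref{lemma:deletion-phi}, because $\partial_\emptyset$ is the identity on $W(A)$. On the other side, $e_{[n] \setminus [0]} = e_{[n]} = f$, the zero element recalled in Subsection~\ref{IDE}. So the condition reads $x = f$.

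Putting these together gives the chain: $\mathcal{L}(x) = 0$ if and only if $0$ is in the set, if and only if $x = e_{[n]}$, if and only if $x = f$. Both directions are handled at once.

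There is no real obstacle here; the statement is immediate from the definition. The only points needing care are the convention $[0] = \emptyset$ and the fact that $\overline{\partial}_\emptyset$ is the identity endomorphism. The latter can also be seen from $\Theta(\overline{\partial}_\emptyset) = I_{[n]} = I$, the unit of $D_n$.
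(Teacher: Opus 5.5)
Your proof is correct and follows essentially the same route as the paper: both reduce $\mathcal{L}(x) = 0$ to the condition $\overline{\partial}_{[0]}(x) = e_{[n] \setminus [0]}$, and then use $\overline{\partial}_{[0]}(x) = x$ and $e_{[n]} = f$. The only difference is that the paper writes the two directions out separately, while you give a single chain of equivalences.
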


\begin{proof}
    If $\mathcal{L}(x) = 0$, then
    \[
    x = \overline{\partial}_{[0]}(x) = e_{[n] \setminus [0]} = f.
    \]
    Conversely, we have
    \[
    \overline{\partial}_{[0]}(f) = f = e_{[n] \setminus [0]},
    \]
    and hence $\mathcal{L}(f) = 0$.
\end{proof}

\begin{corollary}\label{RL2}
    Let $x \in K_n$ and let $k \in \{\mathcal{L}(x), \mathcal{L}(x) + 1, \dots, n\}$. Then 
    \[
    \overline{\partial}_{[k]}(x) = e_{[n] \setminus [k]}.
    \]
\end{corollary}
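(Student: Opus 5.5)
This follows directly from Lemma~\ref{lf1} once we unpack the definition of $\mathcal{L}$. The plan is to set $m = \mathcal{L}(x)$ and note that $m$ belongs to the set
\[
\{i \in \{0, 1, \dots, n\} \mid \overline{\partial}_{[i]}(x) = e_{[n] \setminus [i]} \}.
\]
This set is nonempty, since it contains $n$, and it is a subset of $\{0, 1, \dots, n\}$. Hence its minimum exists and is itself an element of the set, which gives
\[
\overline{\partial}_{[m]}(x) = e_{[n] \setminus [m]}.
\]

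Next, take any $k \in \{m, m+1, \dots, n\}$ and write $k = m + r$ with $r = k - m \in \{0, 1, \dots, n - m\}$. The hypothesis of Lemma~\ref{lf1} holds for this $m$, and $m \in \{0, 1, \dots, n\}$, so that lemma yields
\[
\overline{\partial}_{[m+r]}(x) = e_{[n] \setminus [m+r]}.
\]
This is exactly $\overline{\partial}_{[k]}(x) = e_{[n] \setminus [k]}$.

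There is no real obstacle here. The only point needing care is the observation that the minimum defining $\mathcal{L}(x)$ is attained, so that the defining property holds at $i = \mathcal{L}(x)$. After that, the statement is an immediate instance of Lemma~\ref{lf1}.
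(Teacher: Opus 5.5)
Your proof is correct and follows the paper's argument: set $l=\mathcal{L}(x)$, observe that the defining minimum is attained so that $\overline{\partial}_{[l]}(x)=e_{[n]\setminus[l]}$, and apply Lemma~\ref{lf1} with $r=k-l$. Your explicit justification that the minimum is attained is a detail the paper's proof takes for granted.
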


\begin{proof}
    Set $l = \mathcal{L}(x)$.
    Then $\overline{\partial}_{[l]}(x) = e_{[n] \setminus [l]}$.
    Lemma~\ref{lf1} implies that for any $r \in \{0, 1, \dots, n-l\}$, we have
    \[
    \overline{\partial}_{[l + r]}(x) = e_{[n] \setminus [l + r]}.
    \]
    Hence, for any $k \in \{l, l+1, \dots, n \}$, we obtain
    \[
    \overline{\partial}_{[k]}(x) = e_{[n] \setminus [k]}.
    \]
\end{proof}

\begin{lemma}\label{L_equal_n}
    Let $x \in K_n$. Then $\mathcal{L}(x) = n$ if and only if $c(x) \subseteq [n-1]$.
\end{lemma}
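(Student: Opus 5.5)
By the definition of $\mathcal{L}$ together with Corollary~\ref{RL2}, we have $\mathcal{L}(x) = n$ if and only if $\overline{\partial}_{[n-1]}(x) \neq e_{[n] \setminus [n-1]} = a_n$. Indeed, if $\mathcal{L}(x) \le n-1$, then Corollary~\ref{RL2} with $k = n-1$ gives $\overline{\partial}_{[n-1]}(x) = a_n$. Conversely, if $\overline{\partial}_{[n-1]}(x) = a_n$, then $n-1$ belongs to the defining set, so $\mathcal{L}(x) \le n-1$. The lemma is therefore equivalent to the statement that $\overline{\partial}_{[n-1]}(x) = a_n$ if and only if $n \in c(x)$, and I will prove this reformulation.

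To compute $\overline{\partial}_{[n-1]}(x)$, I write $x = \varphi(\wq)$ for some $\wq \in W(A)$. By Lemma~\ref{lemma:deletion-phi}, $\overline{\partial}_{[n-1]}(x) = \varphi(\partial_{[n-1]}(\wq))$. The word $\partial_{[n-1]}(\wq)$ is obtained from $\wq$ by deleting every letter other than $\aq_n$, so it equals $\aq_n^t$, where $t$ is the number of occurrences of $\aq_n$ in $\wq$. Since $a_n^2 = a_n$, this gives $\overline{\partial}_{[n-1]}(x) = a_n$ if $t \ge 1$ and $\overline{\partial}_{[n-1]}(x) = e$ if $t = 0$. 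Moreover, $t \ge 1$ holds exactly when $n \in c(x)$, and this does not depend on the chosen word because $c$ is well-defined.

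It remains to check that $e \neq a_n$ in $K_n$. This follows from the content map: $c(e) = \emptyset$ while $c(a_n) = \{n\}$. Combining these steps, $\overline{\partial}_{[n-1]}(x) = a_n$ if and only if $n \in c(x)$. Hence $\mathcal{L}(x) = n$ if and only if $n \notin c(x)$, that is, if and only if $c(x) \subseteq [n-1]$. There is no real obstacle in this argument; the only points needing care are the well-definedness of $c$, which lets us pass from the word $\wq$ to $x$, and the use of Corollary~\ref{RL2} to reduce the minimum in the definition of $\mathcal{L}$ to the single index $n-1$.
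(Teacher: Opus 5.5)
Your proof is correct and takes essentially the same approach as the paper. Both arguments use the definition of $\mathcal{L}$ and Corollary~\ref{RL2} to reduce $\mathcal{L}(x)=n$ to $\overline{\partial}_{[n-1]}(x)\neq a_n$, then compute $\overline{\partial}_{[n-1]}(x)=\varphi(\partial_{[n-1]}(\wq))$, which is $a_n$ or $e$ according to whether $\aq_n$ occurs in $\wq$; your content-map justification of $e\neq a_n$ fills in a small point the paper leaves implicit.
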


\begin{proof}
    Write $x = \varphi(\wq)$ for some $\wq \in W(A)$.
    If $\mathcal{L}(x) = n$, then by the definition of $\mathcal{L}$,  $\overline{\partial}_{[n-1]}(x) \neq e_{[n] \setminus [n-1]} = a_n$, and hence
    \[
    \varphi(\partial_{[n-1]}(\wq)) \neq a_n.
    \]
    If $\aq_n$ occurs in $\wq$, it follows that $\varphi(\partial_{[n-1]}(\wq)) = a_n$, which is a contradiction, therefore $\aq_n$ does not occur in $\wq$.
    Hence $c(x) \subseteq [n-1]$.
    Conversely, if $c(x) \subseteq [n-1]$, then 
    \[
    \overline{\partial}_{[n-1]}(x) = \varphi(\partial_{[n-1]}(\wq)) = \varphi(\eq) = e \neq a_n = e_{[n] \setminus [n-1]}.
    \]
    Corollary~\ref{RL2} then implies $\mathcal{L}(x) > n - 1$, and hence $\mathcal{L}(x) = n$.
\end{proof}

\begin{proposition}\label{L:upper:bound}
    Let $x, y \in K_n$. Then 
    \[
    \mathcal{L}(x y) \leq \min \{\mathcal{L}(x), \mathcal{L}(y)\}.
    \]
\end{proposition}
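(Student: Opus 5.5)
The plan is to show separately that $\mathcal{L}(xy) \leq \mathcal{L}(x)$ and $\mathcal{L}(xy) \leq \mathcal{L}(y)$. Both follow from the same observation: the idempotent $e_{[n] \setminus [l]}$ is a zero element for everything in the image of $\overline{\partial}_{[l]}$. Since $\mathcal{L}$ is defined as a minimum, it suffices in each case to exhibit the index $l$ as a member of the defining set for $xy$, that is, to show $\overline{\partial}_{[l]}(xy) = e_{[n] \setminus [l]}$.

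Set $l = \mathcal{L}(x)$. If $l = n$ there is nothing to prove, since $\mathcal{L}$ takes values in $\{0, 1, \dots, n\}$, so assume $l < n$.
\begin{enumerate}[label=(\arabic*)]
\item By the definition of $\mathcal{L}$, we have $\overline{\partial}_{[l]}(x) = e_{[n] \setminus [l]}$.
\item Since $\overline{\partial}_{[l]}$ is an endomorphism,
\[
\overline{\partial}_{[l]}(xy) = \overline{\partial}_{[l]}(x)\, \overline{\partial}_{[l]}(y) = e_{[n] \setminus [l]}\, \overline{\partial}_{[l]}(y).
\]
\item Write $y = \varphi(\wq)$. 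By Lemma~\ref{lemma:deletion-phi}, $\overline{\partial}_{[l]}(y) = \varphi(\partial_{[l]}(\wq))$. This is either $e$, or an element of the subsemigroup $S = \langle a_{l+1}, \dots, a_n \rangle$.
\item As in the proof of Lemma~\ref{L3}, $S \cong K_{n-l}$ via $a_i \mapsto a_{i-l}$. Under this isomorphism $e_{[n] \setminus [l]}$ corresponds to $e_{[n-l]}$, which is the zero element of $K_{n-l}$. Hence $e_{[n] \setminus [l]}$ is a zero of $S$.
\item It follows that $e_{[n] \setminus [l]}\, \overline{\partial}_{[l]}(y) = e_{[n] \setminus [l]}$, both when $\overline{\partial}_{[l]}(y) \in S$ and trivially when it equals $e$.
\item Therefore $l$ lies in the set defining $\mathcal{L}(xy)$, and $\mathcal{L}(xy) \leq \mathcal{L}(x)$.
\end{enumerate}

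For $y$ the argument is symmetric. With $l = \mathcal{L}(y)$ one gets
\[
\overline{\partial}_{[l]}(xy) = \overline{\partial}_{[l]}(x)\, e_{[n] \setminus [l]} = e_{[n] \setminus [l]},
\]
again because $\overline{\partial}_{[l]}(x) \in S \cup \{e\}$ and $e_{[n] \setminus [l]}$ is a two-sided zero of $S$.

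The only point needing care is the claim that $e_{[n] \setminus [l]}$ is a two-sided zero of $S$. I would justify it by transporting through the isomorphism $S \cong K_{n-l}$ and invoking the fact, recalled in Section~\ref{IDE}, that $e_{[n-l]}$ is the zero of $K_{n-l}$. The degenerate case $n - l = 1$ is harmless: there $S = \{a_n\}$ and the claim is immediate.
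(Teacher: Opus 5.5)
Your proof is correct and follows essentially the same route as the paper: you apply the endomorphism $\overline{\partial}_{[l]}$ to $xy$, use $\overline{\partial}_{[l]}(x) = e_{[n]\setminus[l]}$ (respectively $\overline{\partial}_{[l]}(y) = e_{[n]\setminus[l]}$), and let this idempotent absorb the other factor, whose content lies in $[n]\setminus[l]$. The only difference is how you justify the absorption: you transport it through $S \cong K_{n-l}$ and use that $e_{[n-l]}$ is the zero of $K_{n-l}$, whereas the paper gets it directly from Lemma~\ref{lemma:shorten}~(\ref{lemma:shorten:i}) and~(\ref{lemma:shorten:ii}), which is slightly more self-contained because it does not rely on that isomorphism.
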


\begin{proof}
    Denote $l_1 = \mathcal{L}(x)$ and $l_2 = \mathcal{L}(y)$.
    Since $c(\overline{\partial}_{[l_1]}(y)) \subseteq [n] \setminus [l_1]$, Lemma~\ref{lemma:shorten}~(\ref{lemma:shorten:i}) implies
    \begin{align*}
        \overline{\partial}_{[l_1]}(x y) &= \overline{\partial}_{[l_1]}(x) \overline{\partial}_{[l_1]}(y) \\
        &= e_{[n] \setminus [l_1]} \overline{\partial}_{[l_1]}(y) \\
        &= e_{[n] \setminus [l_1]},
    \end{align*}
    which proves $\mathcal{L}(x y) \leq l_1$.
    Since $c(\overline{\partial}_{[l_2]}(x)) \subseteq [n] \setminus [l_2]$, Lemma~\ref{lemma:shorten}~(\ref{lemma:shorten:ii}) implies
    \begin{align*}
        \overline{\partial}_{[l_2]}(x y) &= \overline{\partial}_{[l_2]}(x) \overline{\partial}_{[l_2]}(y) \\
        &= \overline{\partial}_{[l_2]}(x) e_{[n] \setminus [l_2]} \\
        &= e_{[n] \setminus [l_2]},
    \end{align*}
    which proves $\mathcal{L}(x y) \leq l_2$.
    This concludes the proof.
\end{proof}

\begin{proposition}\label{RP1}
    Let $x \in K_n$. If $\mathcal{L}(x) > 0$, then 
    \[
    \mathcal{L}(x a_{\mathcal{L}(x)}) = \mathcal{L}(x) - 1.
    \]
\end{proposition}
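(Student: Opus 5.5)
Set $l = \mathcal{L}(x) > 0$. We must show two things: that $\mathcal{L}(x a_l) \leq l-1$, and that $\mathcal{L}(x a_l) \geq l-1$.

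The upper bound comes straight from the definition of $\mathcal{L}$. We have $\overline{\partial}_{[l]}(x) = e_{[n] \setminus [l]}$ and $l \in [n]$, so Lemma~\ref{lf2} with $m = l$ gives $\overline{\partial}_{[l-1]}(x a_l) = e_{[n] \setminus [l-1]}$. Hence $\mathcal{L}(x a_l) \leq l - 1$.

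For the lower bound, if $l = 1$ there is nothing to prove, so assume $l \geq 2$. Suppose, for contradiction, that $\mathcal{L}(x a_l) \leq l-2$. Then Corollary~\ref{RL2} with $k = l-2$ applied to $x a_l$ gives $\overline{\partial}_{[l-2]}(x a_l) = e_{[n] \setminus [l-2]}$. Now apply Lemma~\ref{L3} with $m = l-2$ and $k = 2$, so that $m + k = l$. The hypotheses check out: $m \in \{0, \dots, n-2\}$ because $l \leq n$, and $k = 2 \leq n - m = n - l + 2$. The lemma then yields $\overline{\partial}_{[l-2]}(x) = e_{[n] \setminus [l-2]}$, which means $\mathcal{L}(x) \leq l-2 < l$, a contradiction. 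Therefore $\mathcal{L}(x a_l) = l - 1$.

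The key step is reducing the lower bound to Lemma~\ref{L3} with $k=2$, which in turn rests on Proposition~\ref{ct2}. Once that lemma is available, the only work left is checking the index ranges, in particular $l \geq 2$ and $l \leq n$. So I don't expect a serious obstacle; the main care needed is the separate treatment of $l = 1$.
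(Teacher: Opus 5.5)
Your proof is correct and follows the paper's own argument essentially step for step. You get the upper bound from Lemma~\ref{lf2}, handle $l=1$ separately, and for $l \geq 2$ you derive a contradiction via Corollary~\ref{RL2} and Lemma~\ref{L3} with $m = l-2$, $k = 2$, with the same index checks.
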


\begin{proof}
    We denote $l = \mathcal{L}(x)$.
    Since $\overline{\partial}_{[l]}(x) = e_{[n] \setminus [l]}$ and $l \in [n]$, Lemma~\ref{lf2} implies that
    \[
    \overline{\partial}_{[l-1]}(x a_l) = e_{[n] \setminus [l-1]},
    \]
    and therefore $\mathcal{L}(x a_l) \leq l - 1$.

    If $l = 1$, then $\mathcal{L}(x a_l) \leq 0$, and hence $\mathcal{L}(x a_l) = 0 = \mathcal{L}(x) - 1$. 

    Now assume that $l \geq 2$.
    Suppose that $\mathcal{L}(x a_l) \leq l -2$.
    By Corollary~\ref{RL2}, we obtain
    \[
    \overline{\partial}_{[l - 2]}(x a_l) = e_{[n] \setminus [l - 2]}.
    \]
    Since $l - 2 \in \{0, 1, \dots, n - 2\}$, Lemma~\ref{L3} for $m = l - 2$ and $k = 2$ implies that 
    \[
    \overline{\partial}_{[l-2]}(x) = e_{[n] \setminus [l -2]}.
    \]
    Hence
    \[
    \mathcal{L}(x) \leq l - 2 = \mathcal{L}(x) - 2,
    \]
    which is a contradiction.
    Hence we must have $\mathcal{L}(x a_l) \geq l - 1$.
    Therefore, we indeed have
    \[
    \mathcal{L}(x a_{\mathcal{L}(x)}) = \mathcal{L}(x) - 1.
    \]
    This concludes the proof.
\end{proof}

\begin{proposition}\label{RP2}
    Let $x \in K_n$ with $\mathcal{L}(x) > 0$ and let $i \in \{1, 2, \dots, \mathcal{L}(x) - 1\}$.
    Then 
    \[
    \mathcal{L}(x a_i) = \mathcal{L}(x).
    \]
\end{proposition}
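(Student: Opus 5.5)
Set $l = \mathcal{L}(x)$ and fix $i \in \{1, \dots, l-1\}$, so that $l \geq 2$. Proposition~\ref{L:upper:bound} gives $\mathcal{L}(x a_i) \leq \min\{l, \mathcal{L}(a_i)\} \leq l$. It therefore remains to show $\mathcal{L}(x a_i) \geq l$. I will argue by contradiction, mirroring the second half of the proof of Proposition~\ref{RP1}.

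Suppose $\mathcal{L}(x a_i) \leq l - 1$. By Corollary~\ref{RL2}, applied with $k = l - 1 \geq \mathcal{L}(x a_i)$, we get
\[
\overline{\partial}_{[l-1]}(x a_i) = e_{[n] \setminus [l-1]}.
\]
Since $i \leq l-1$, equation~\eqref{100} gives $\overline{\partial}_{[l-1]}(a_i) = e$. The endomorphism property then yields
\[
\overline{\partial}_{[l-1]}(x a_i) = \overline{\partial}_{[l-1]}(x)\, \overline{\partial}_{[l-1]}(a_i) = \overline{\partial}_{[l-1]}(x).
\]
Hence $\overline{\partial}_{[l-1]}(x) = e_{[n] \setminus [l-1]}$, and so $\mathcal{L}(x) \leq l - 1$ by the definition of $\mathcal{L}$ as a minimum. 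This contradicts $\mathcal{L}(x) = l$, so $\mathcal{L}(x a_i) \geq l$, and combined with the upper bound we obtain $\mathcal{L}(x a_i) = \mathcal{L}(x)$.

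I expect no step to be a real obstacle. The key observation is that deleting the letters $[l-1]$ kills $a_i$ exactly because $i < l$. The only point needing care is that Corollary~\ref{RL2} requires $l - 1 \geq \mathcal{L}(x a_i)$, which is precisely the contradiction hypothesis, and that $l - 1 \leq n$, which is automatic.
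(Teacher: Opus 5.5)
Your proposal is correct and follows the paper's proof essentially step for step. You get the upper bound from Proposition~\ref{L:upper:bound}, then suppose $\mathcal{L}(x a_i) \leq l-1$, apply Corollary~\ref{RL2} at level $l-1$, and use $\overline{\partial}_{[l-1]}(a_i) = e$ to contradict the minimality of $\mathcal{L}(x)$. The $l = 1$ case, which the paper treats separately as vacuous, is correctly absorbed by your remark that a valid $i$ forces $l \geq 2$.
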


\begin{proof}
    We denote $l = \mathcal{L}(x)$.
    If $l = 1$, then the set $\{1, 2, \dots, l-1\}$ is empty and the claim vacuously holds.
    Now assume $l \geq 2$.
    By Proposition~\ref{L:upper:bound}, we have $\mathcal{L}(x a_i) \leq l$.

    Suppose $\mathcal{L}(x a_i) \leq l - 1$. 
    By Corollary~\ref{RL2}, we have 
    \[
    \overline{\partial}_{[l-1]}(x a_i) = e_{[n] \setminus [l-1]}.
    \]
    Since $i \in \{1, 2, \dots, l -1\}$, we have $\overline{\partial}_{[l-1]}(a_i) = e$, by~\eqref{100}.
    It follows that
    \[
    \overline{\partial}_{[l-1]}(x) = \overline{\partial}_{[l-1]}(x) \overline{\partial}_{[l-1]}(a_i) = \overline{\partial}_{[l-1]}(x a_i) = e_{[n] \setminus [l-1]}.
    \]
    This implies
    \[
    \mathcal{L}(x) \leq l -1 = \mathcal{L}(x) - 1,
    \]
    which is a contradiction.
    Hence we must have $\mathcal{L}(x a_i) \geq l$, and therefore 
    \[
    \mathcal{L}(x a_i) = \mathcal{L}(x).
    \]
\end{proof}

\begin{proposition}\label{RP3}
    Let $x \in K_n$ with $\mathcal{L}(x) > 0$ and let $i \in \{\mathcal{L}(x) + 1, \mathcal{L}(x) + 2, \dots, n\}$.
    Then
    \[
    \mathcal{L}(x a_i) = \mathcal{L}(x).
    \]
\end{proposition}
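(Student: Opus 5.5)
Set $l = \mathcal{L}(x)$ and take $i \in \{l+1, \dots, n\}$. Proposition~\ref{L:upper:bound} already gives $\mathcal{L}(x a_i) \leq l$, so the whole task is the reverse inequality. We argue by contradiction, in the same way as Propositions~\ref{RP1} and~\ref{RP2}.

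Suppose $\mathcal{L}(x a_i) \leq l - 1$. Then $l \geq 1$, and by Corollary~\ref{RL2} we get
\[
\overline{\partial}_{[l-1]}(x a_i) = e_{[n] \setminus [l-1]}.
\]
We want to invoke Lemma~\ref{L3} with $m = l - 1$ and $k = i - l + 1$, so that $m + k = i$. The hypotheses of Lemma~\ref{L3} require the following.
\begin{itemize}
\item $m \in \{0, \dots, n-2\}$, which means $l \leq n-1$. This holds because $l < i \leq n$.
\item $k \in \{2, \dots, n-m\}$. Since $i \geq l+1$ we have $k \geq 2$, and since $i \leq n$ we have $k \leq n - l + 1 = n - m$.
\end{itemize}
Lemma~\ref{L3} then yields
\[
\overline{\partial}_{[l-1]}(x) = e_{[n]\setminus[l-1]},
\]
so $\mathcal{L}(x) \leq l-1$, a contradiction. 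Hence $\mathcal{L}(x a_i) \geq l$, and therefore $\mathcal{L}(x a_i) = l$.

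The only step needing care is the index bookkeeping that matches Lemma~\ref{L3}, in particular that $k \geq 2$. This is exactly where $i \neq l$ is used: if $i = l$, the level drops, as Proposition~\ref{RP1} shows. The underlying fact is Proposition~\ref{ct2} in $K_{n-m}$, and Lemma~\ref{L3} has already packaged it.
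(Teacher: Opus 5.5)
Your proposal is correct and follows the paper's proof essentially step for step. Both argue by contradiction from $\mathcal{L}(x a_i) \leq l-1$, apply Corollary~\ref{RL2}, and then invoke Lemma~\ref{L3} with $m = l-1$ and $k = i-l+1$, using the same index bookkeeping. The only cosmetic difference is that the paper first treats $l = n$ as a vacuous case, whereas you derive $l \leq n-1$ from $l < i \leq n$.
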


\begin{proof}
    We denote $l = \mathcal{L}(x)$. 
    If $l = n$, there does not exist any $i \in \{l+1, l+2, \dots, n\}$, and the statement is vacuously true.
    Now assume that $l \leq n-1$.
    By Proposition~\ref{L:upper:bound}, we have $\mathcal{L}(x a_i) \leq l$.

    Suppose that $\mathcal{L}(x a_i) \leq l - 1$.
    By Corollary~\ref{RL2}, we have
    \[
    \overline{\partial}_{[l-1]}(x a_i) = e_{[n] \setminus [l-1]}.
    \]
    Since $l-1 \in \{0, 1, \dots, n - 2\}$ and $i - l + 1 \in \{2, 3, \dots, n-l+1\}$, we use Lemma~\ref{L3} for $m = l - 1$ and $k = i - l + 1$ to obtain
    \[
    \overline{\partial}_{[l-1]}(x) = e_{[n] \setminus [l-1]}.
    \]
    Hence
    \[
    \mathcal{L}(x) \leq l - 1 = \mathcal{L}(x) - 1,
    \]
    which is a contradiction, and hence we must have $\mathcal{L}(x a_i) \geq l$.
    Thus we have
    \[
    \mathcal{L}(x a_i) = \mathcal{L}(x).
    \]
\end{proof}

\begin{proposition}\label{propo}
    Let $x \in K_n$.
    If $\mathcal{L}(x) = 0$, then for any $y \in K_n$, we have
    \[
    \mathcal{L}(xy) = \mathcal{L}(yx) = \mathcal{L}(x).
    \]
\end{proposition}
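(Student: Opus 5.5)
The plan is to use Lemma~\ref{RU1} to turn the hypothesis into $x = f$, and then apply Proposition~\ref{L:upper:bound}. There are two routes, and both give the result at once.

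First, assume $\mathcal{L}(x) = 0$. By Lemma~\ref{RU1} this means $x = f$. Since $f$ is the zero element of $K_n$, we have $xy = fy = f$ and $yx = yf = f$ for every $y \in K_n$. Applying Lemma~\ref{RU1} again, in the converse direction, gives $\mathcal{L}(xy) = \mathcal{L}(f) = 0$ and $\mathcal{L}(yx) = 0$, both equal to $\mathcal{L}(x)$.

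Alternatively, and without using that $f$ is the zero, Proposition~\ref{L:upper:bound} gives
\[
\mathcal{L}(xy) \leq \min\{\mathcal{L}(x), \mathcal{L}(y)\} \leq \mathcal{L}(x) = 0,
\]
and symmetrically $\mathcal{L}(yx) \leq \min\{\mathcal{L}(y), \mathcal{L}(x)\} \leq 0$. Since $\mathcal{L}$ takes values in $\{0, 1, \dots, n\}$, both quantities equal $0 = \mathcal{L}(x)$.

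There is no real obstacle here. The only point to check is that the bound in Proposition~\ref{L:upper:bound} is symmetric in its two arguments, so it controls both $xy$ and $yx$, and that $\mathcal{L}$ is nonnegative. I would present the second argument, since it depends only on results about $\mathcal{L}$ already proved in this section.
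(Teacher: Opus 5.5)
Your proposal is correct. Your first route is exactly the paper's proof: Lemma~\ref{RU1} gives $x = f$, and since $f$ is the zero element, $xy = yx = f$, so all three levels coincide.

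Your second route, the one you say you would present, is genuinely different. It bypasses Lemma~\ref{RU1} and the zero property. Instead it applies Proposition~\ref{L:upper:bound} to both $(x,y)$ and $(y,x)$ and uses that $\mathcal{L} \geq 0$. This is legitimate, since Proposition~\ref{L:upper:bound} is proved earlier and does not depend on the statement at hand.

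The trade-off is this. The paper's argument is shorter and gives the stronger conclusion $xy = yx = x$ as elements. Yours gives only equality of levels, although $xy = yx = f$ can be recovered afterwards from Lemma~\ref{RU1}.

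Your remark that the second route works ``without using that $f$ is the zero'' is true only of what you cite. When $\mathcal{L}(x) = 0$, the relevant parts of the proof of Proposition~\ref{L:upper:bound} reduce to showing $fy = f$ and $yf = f$ via Lemma~\ref{lemma:shorten}. In other words, they reprove that $f$ absorbs on both sides. So both routes rest on the same underlying fact; yours just packages it inside the calculus of $\mathcal{L}$.
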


\begin{proof}
    By Lemma~\ref{RU1}, we have $x = f$.
    Since $x$ is the zero element, we have $xy = yx = x$ for any $y \in K_n$, and the result follows.
\end{proof}

The following formula tells us how $\mathcal{L}$ behaves under right multiplication by the generators $a_i$.
It is its most important property.
\begin{theorem}\label{RL_formula}
    Let $x \in K_n$ and $i \in [n]$.
    Then 
    \begin{equation}\label{formula}
        \mathcal{L}(x a_i) = 
        \begin{cases}
            \mathcal{L}(x) - 1, & i = \mathcal{L}(x), \\
            \mathcal{L}(x), &  i \neq \mathcal{L}(x).
        \end{cases}
    \end{equation}
\end{theorem}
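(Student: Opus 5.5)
The plan is a case split on the value of $l = \mathcal{L}(x)$ relative to $i$, assembling the propositions already proved in this section. There is no new computation: the work is in checking that the cases exhaust all possibilities and that the hypotheses of each cited result hold.

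First, suppose $l = 0$. Since $i \in [n]$, we always have $i \neq 0 = \mathcal{L}(x)$, so the formula asks for $\mathcal{L}(x a_i) = \mathcal{L}(x)$. This is exactly Proposition~\ref{propo} with $y = a_i$: by Lemma~\ref{RU1} we have $x = f$, and then $x a_i = f$.

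Next, suppose $l > 0$. Every $i \in [n]$ falls into exactly one of three ranges.
\begin{itemize}
\item If $i = l$, Proposition~\ref{RP1} gives $\mathcal{L}(x a_l) = l - 1$.
\item If $i \in \{1, \dots, l-1\}$, Proposition~\ref{RP2} gives $\mathcal{L}(x a_i) = l$.
\item If $i \in \{l+1, \dots, n\}$, Proposition~\ref{RP3} gives $\mathcal{L}(x a_i) = l$.
\end{itemize}
The last two ranges together are precisely the indices $i \neq l$, so the displayed formula~\eqref{formula} holds in every case.

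The only point needing care is the degenerate ranges. When $l = 1$, the set $\{1, \dots, l-1\}$ is empty; when $l = n$, the set $\{l+1, \dots, n\}$ is empty. Propositions~\ref{RP2} and~\ref{RP3} already treat these situations vacuously. I therefore expect no genuine obstacle: the substantive work was done in Lemma~\ref{L3} and Propositions~\ref{RP1}--\ref{RP3}, and this theorem is their combination.
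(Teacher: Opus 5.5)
Your proposal is correct and is the same argument as the paper's. Both handle $\mathcal{L}(x)=0$ via Proposition~\ref{propo} with $y=a_i$, and handle $\mathcal{L}(x)>0$ by combining Propositions~\ref{RP1}, \ref{RP2}, and~\ref{RP3}, with the empty-range cases treated vacuously.
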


\begin{proof}
    If $\mathcal{L}(x) > 0$, Propositions~\ref{RP2} and~\ref{RP3} imply that
    \[
    \mathcal{L}(x a_i) = \mathcal{L}(x),
    \]
    for all $i \in [n] \setminus \{\mathcal{L}(x)\}$.
    By Proposition~\ref{RP1}, we have
    \[
    \mathcal{L}(x a_{\mathcal{L}(x)}) = \mathcal{L}(x) - 1,
    \]
    which proves the validity of~\eqref{formula} in this case.

    If $\mathcal{L}(x) = 0$, then $i \neq \mathcal{L}(x)$ for all $i \in [n]$.
    We use Proposition~\ref{propo} for $y = a_i$ to conclude that~\eqref{formula} also holds in this case.
\end{proof}

Theorem~\ref{RL_formula} describes the behaviour of $\mathcal{L}$ under right multiplication by generators.
The following proposition gives the corresponding rule for left multiplication by generators $a_i$ with $i < n$.
\begin{proposition}\label{L_leftmul}
    Let $x \in K_n$ and $i \in [n-1]$. Then
    \[
    \mathcal{L}(a_i x) = \mathcal{L}(x).
    \]
\end{proposition}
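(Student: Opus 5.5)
We prove $\mathcal{L}(a_i x) \le \mathcal{L}(x)$ and $\mathcal{L}(a_i x) \ge \mathcal{L}(x)$ separately. Write $l = \mathcal{L}(x)$.

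\emph{Upper bound.} By Proposition~\ref{L:upper:bound}, $\mathcal{L}(a_i x) \leq \min\{\mathcal{L}(a_i), \mathcal{L}(x)\} \leq l$.

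\emph{Lower bound.} If $l = 0$ there is nothing to prove, so assume $l \geq 1$ and suppose, for contradiction, that $\mathcal{L}(a_i x) \leq l-1$. Corollary~\ref{RL2} then gives
\[
\overline{\partial}_{[l-1]}(a_i x) = e_{[n] \setminus [l-1]}.
\]
We split according to the position of $i$ relative to $l$.

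\emph{Case $i \leq l-1$.} By~\eqref{100}, $\overline{\partial}_{[l-1]}(a_i) = e$. Hence $\overline{\partial}_{[l-1]}(x) = \overline{\partial}_{[l-1]}(a_i x) = e_{[n]\setminus[l-1]}$, so $\mathcal{L}(x) \leq l-1$, a contradiction.

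\emph{Case $i \geq l$.} Put $m = l-1$ and $r = i - m = i - l + 1$. Since $l \le i \le n-1$, we have $m \in \{0, \dots, n-2\}$ and $r \in \{1, \dots, n-m-1\}$. Lemma~\ref{L3} (the left-multiplication alternative) applied to $\overline{\partial}_{[m]}(a_{m+r} x) = e_{[n]\setminus[m]}$ yields $\overline{\partial}_{[l-1]}(x) = e_{[n]\setminus[l-1]}$. This again gives $\mathcal{L}(x) \le l-1$, a contradiction.

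In both cases $\mathcal{L}(a_i x) \geq l$, and together with the upper bound we get $\mathcal{L}(a_i x) = \mathcal{L}(x)$.

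The only delicate step is checking the index ranges for Lemma~\ref{L3}. This is exactly where the hypothesis $i \le n-1$ is used: it ensures $r \le n-m-1$, which corresponds to $a_r$ with $r<n$ in Proposition~\ref{ct2}. The two degenerate values also need a glance. When $l = n$ the case $i \ge l$ cannot occur, since $i \le n-1$. When $l = 1$ we have $m = 0$, the case $i \le l-1$ is empty, and Lemma~\ref{L3} with $m=0$ is just Proposition~\ref{ct2}.
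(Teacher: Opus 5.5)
Your proof is correct and follows the paper's argument: the upper bound from Proposition~\ref{L:upper:bound}, then a contradiction via Corollary~\ref{RL2}, split into the case $i \le l-1$ (where $\overline{\partial}_{[l-1]}(a_i) = e$) and the case $i \ge l$ (Lemma~\ref{L3} with $m = l-1$, $r = i-l+1$). The only difference is cosmetic: the paper handles $l = 0$ and $l = n$ separately, via Proposition~\ref{propo} and Lemma~\ref{L_equal_n}, whereas you correctly absorb them into the general argument, with $l = n$ falling entirely into the case $i \le l-1$.
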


\begin{proof}
    Set $l = \mathcal{L}(x)$. 
    If $l = 0$, the conclusion follows by Proposition~\ref{propo} with $y = a_i$.
    If $l = n$, then by Lemma~\ref{L_equal_n}, we have $c(x) \subseteq [n-1]$.
    Hence we have $c(a_i x) \subseteq [n-1]$, and again by Lemma~\ref{L_equal_n}, we conclude
    \[
    \mathcal{L}(a_i x) = n = \mathcal{L}(x).
    \]

    Now we assume that $1 \leq l \leq n-1$.
    Proposition~\ref{L:upper:bound} implies $\mathcal{L}(a_i x) \leq l$. 

    Suppose $\mathcal{L}(a_i x) \leq l - 1$.
    Then by Corollary~\ref{RL2}, we have 
    \begin{equation}\label{bbb}
        \overline{\partial}_{[l-1]}(a_i x) = e_{[n] \setminus [l-1]}.
    \end{equation}

    If $i \leq l - 1$, then by~\eqref{100}, $\overline{\partial}_{[l-1]}(a_i) = e$, and hence from~\eqref{bbb} we obtain
    \[
    \overline{\partial}_{[l-1]}(x) = e_{[n] \setminus [l-1]},
    \]
    which implies
    \[
    \mathcal{L}(x) \leq l -1 = \mathcal{L}(x) - 1,
    \]
    which is a contradiction.

    Now assume that $i \geq l$.
    Since $l-1 \in \{0, 1, \dots, n-2\}$ and $i-l+1 \in \{1, 2, \dots, n-l\}$, we use Lemma~\ref{L3} for $m=l-1$ and $r=i-l+1$ to conclude from~\eqref{bbb} that
    \[
    \overline{\partial}_{[l-1]}(x) = e_{[n] \setminus [l-1]},
    \]
    which again leads to
    \[
    \mathcal{L}(x) \leq l - 1 = \mathcal{L}(x) - 1,
    \]
    a contradiction.

    Hence, we reached a contradiction in both cases, and therefore we must have $\mathcal{L}(a_i x) \geq l$, and therefore 
    \[
    \mathcal{L}(a_i x) = \mathcal{L}(x).
    \]
\end{proof}

\begin{theorem}
    Let $x, y \in K_n$ with $c(y) \subseteq [n-1]$. Then
    \[
    \mathcal{L}(yx) = \mathcal{L}(x).
    \]
\end{theorem}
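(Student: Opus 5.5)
We will derive the theorem from Proposition~\ref{L_leftmul} by induction on the length of a word representing $y$. Since $c(y) \subseteq [n-1]$, every word $\wq \in W(A)$ with $\varphi(\wq) = y$ uses only letters $\aq_1, \dots, \aq_{n-1}$. This follows from the well-definedness of the content map $c$ recalled in Section~\ref{IDE}. Fix such a word $\wq = \aq_{i_1} \aq_{i_2} \dots \aq_{i_k}$, where each $i_j \in [n-1]$. If $\wq = \eq$, then $y = e$ and the claim $\mathcal{L}(x) = \mathcal{L}(x)$ is trivial. Note also that $e$ is the unit and is not itself among the generators, so the case $k = 0$ must be handled separately in this way.

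For $k \geq 1$, we will prove by induction on $k$ that $\mathcal{L}(a_{i_1} a_{i_2} \dots a_{i_k} x) = \mathcal{L}(x)$ for every $x \in K_n$ and all indices $i_1, \dots, i_k \in [n-1]$. The base case $k = 1$ is exactly Proposition~\ref{L_leftmul}. For the inductive step, set $x' = a_{i_2} \dots a_{i_k} x$. By the induction hypothesis, $\mathcal{L}(x') = \mathcal{L}(x)$. Applying Proposition~\ref{L_leftmul} with the generator $a_{i_1}$, where $i_1 \in [n-1]$, gives
\[
\mathcal{L}(a_{i_1} x') = \mathcal{L}(x') = \mathcal{L}(x).
\]
It is convenient to peel off generators from the left in this order, so that each step is a single left multiplication by $a_i$ with $i < n$. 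This is precisely the setting of Proposition~\ref{L_leftmul}.

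There is no real obstacle here. The only point needing care is the justification that a representing word for $y$ avoids $\aq_n$. This is immediate from the definition of $c(y)$ as the set of indices occurring in any (equivalently, every) representing word. With that in hand, the argument is a direct iteration of Proposition~\ref{L_leftmul}.
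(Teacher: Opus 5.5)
Your proof is correct and matches the paper's: write $y$ as a product of generators $a_{i_j}$ with $i_j \in [n-1]$ (justified by the well-definedness of $c$), then apply Proposition~\ref{L_leftmul} once per generator. Your induction on $k$ is just a formal version of the paper's chain of equalities $\mathcal{L}(x) = \mathcal{L}(a_{i_k} x) = \cdots = \mathcal{L}(yx)$.
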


\begin{proof}
    If $y = e$, the conclusion follows.
    Otherwise, write $y = a_{i_1} a_{i_2} \dots a_{i_k}$ where
    $i_j \in [n-1]$ for all $j \in [k]$. 
    By Proposition~\ref{L_leftmul}, we have
    \[
    \mathcal{L}(x) = \mathcal{L}(a_{i_k} x) = \mathcal{L}(a_{i_{k-1}} a_{i_k} x) = \cdots = \mathcal{L}(a_{i_1} a_{i_2} \dots a_{i_k} x) = \mathcal{L}(yx).
    \]
\end{proof}

Lemma~\ref{L_equal_n} implies that for every $j \in \{0, 1, \dots, n-1\}$, we have
\[
\mathcal{L}(e_{[n-1] \setminus [j]}) = n.
\]
Since
\[
\overline{\partial}_{[i]}(e_{[n] \setminus [j]}) = e_{([n] \setminus [j]) \setminus [i]} = e_{[n] \setminus [\max\{i, j\}]},
\]
for $i \in \{0, 1, \dots, n\}$, we obtain
\[
\mathcal{L}(a_n e_{[n-1] \setminus [j]}) = \mathcal{L}(e_{[n] \setminus [j]}) = \min \{ i \in \{0, 1, \dots, n\} \mid e_{[n] \setminus [\max\{i, j\}]} = e_{[n] \setminus [i]} \} = j.
\]
Hence, Proposition~\ref{L_leftmul} does not hold for $i = n$.
In contrast to the case $i < n$, left multiplication by $a_n$ can strictly decrease the value of $\mathcal{L}$.

\begin{definition}\label{g_definition}
    We define the function $g : \{0, 1,\dots, n\} \times [n] \to \{0, 1, \dots, n\}$ by
    \[
    g(i, j) =
    \begin{cases}
        i - 1, & i = j, \\
        i, & i \neq j.  
    \end{cases}
    \]
\end{definition}

We can rewrite Theorem~\ref{RL_formula} in terms of $g$.
Let $x \in K_n$ and $i \in [n]$.
Then Theorem~\ref{RL_formula} states that
\begin{equation}\label{L-g_formula}
    \mathcal{L}(x a_i) = g(\mathcal{L}(x), i).
\end{equation}

Let $x \in K_n$.
If $x = e$, then $\mathcal{L}(x) = n$ by Lemma~\ref{L_equal_n}.
Otherwise, write $x = a_{i_1} a_{i_2} \dots a_{i_k}$.
Set $l_0 = \mathcal{L}(e) = n$.
By~\eqref{L-g_formula}, we have 
\[
\mathcal{L}(a_{i_1}) = \mathcal{L}(e a_{i_1}) = g(\mathcal{L}(e), i_1) = g(l_0, i_1).
\]
We denote $l_1 = \mathcal{L}(a_{i_1})$ and then we further have
\[
\mathcal{L}(a_{i_1} a_{i_2}) = g(\mathcal{L}(a_{i_1}), i_2) = g(l_1, i_2).
\]
Now let $l_2 = \mathcal{L}(a_{i_1} a_{i_2})$ and we have 
\[
\mathcal{L}(a_{i_1} a_{i_2} a_{i_3}) = g(\mathcal{L}(a_{i_1} a_{i_2}), i_3) = g(l_2, i_3),
\]
and so on.

We have constructed a finite sequence $l_0, l_1, \dots, l_k$ defined recursively by
\[
l_j = 
\begin{cases}
    n, & j = 0, \\
    g(l_{j-1}, i_j), & j = 1, 2, \dots, k.
\end{cases} 
\]
Formula~\eqref{L-g_formula} implies that
\[
\mathcal{L}(x) = l_k.
\]
Hence this provides an algorithm to compute $\mathcal{L}(x)$.

For $x \in K_n$, we denote 
\begin{equation}\label{a_b}
    \mathcal{A}_x = \{i \in \{0, 1, \dots, n\} \mid \overline{\partial}_{[i]}(x) = e_{[n] \setminus [i]} \}, \quad \mathcal{B}_x = \{i \in \{0, 1, \dots, n\} \mid x e_{[i]} = f\}.
\end{equation}

\begin{theorem}\label{a_b_theorem}
    Let $x \in K_n$. Then $\mathcal{A}_x = \mathcal{B}_x$.
\end{theorem}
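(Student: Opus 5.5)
We prove both inclusions directly. The tools are Lemma~\ref{lemma_chain:2}, which gives $x e_{[i]} = \overline{\partial}_{[i]}(x) e_{[i]}$ (take $j = i$), and the right-multiplication formula of Theorem~\ref{RL_formula}.

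\emph{Inclusion $\mathcal{A}_x \subseteq \mathcal{B}_x$.} Let $i \in \mathcal{A}_x$, so $\overline{\partial}_{[i]}(x) = e_{[n]\setminus[i]}$. Lemma~\ref{lemma_chain:2} with $j=i$ gives
\[
x e_{[i]} = \overline{\partial}_{[i]}(x) e_{[i]} = e_{[n]\setminus[i]} e_{[i]}.
\]
Since the indices in $[n]\setminus[i]$ are larger than those in $[i]$, the product $e_{[n]\setminus[i]} e_{[i]}$ is the ordered word $\eq_{[n]}$. Hence it equals $e_{[n]} = f$, and so $i \in \mathcal{B}_x$.

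\emph{Inclusion $\mathcal{B}_x \subseteq \mathcal{A}_x$.} Corollary~\ref{RL2} tells us that $\mathcal{A}_x = \{\mathcal{L}(x), \mathcal{L}(x)+1, \dots, n\}$: the set is upward closed from $\mathcal{L}(x)$, and it contains no element below $\mathcal{L}(x)$ by minimality. It therefore suffices to show that $x e_{[i]} = f$ forces $i \geq \mathcal{L}(x)$.

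Write $e_{[i]} = a_i a_{i-1} \cdots a_1$ and follow the level along the partial products $x, x a_i, x a_i a_{i-1}, \dots$ using Theorem~\ref{RL_formula}. Start at $l = \mathcal{L}(x)$ and suppose $i < l$. Then every multiplying index is $< l$, so in particular none equals $l$. By induction each step leaves the level at $l$: the level stays $l$, and the next index is again $\neq l$.

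So $\mathcal{L}(x e_{[i]}) = l > 0$. By Lemma~\ref{RU1} this means $x e_{[i]} \neq f$, contradicting $i \in \mathcal{B}_x$. Thus $i \geq l$, and so $i \in \mathcal{A}_x$.

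The case $i=0$ needs only a check: $e_{[0]} = e$. If $0 \in \mathcal{B}_x$ then $x = f$, so $\mathcal{L}(x) = 0$ by Lemma~\ref{RU1}, which is consistent with the argument above.

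The only genuinely delicate point is the ordering identity $e_{[n]\setminus[i]} e_{[i]} = f$ together with the invariance argument in the second inclusion. Both are short once Theorem~\ref{RL_formula} is available. I expect no substantial obstacle beyond keeping the edge cases $i=0$ and $\mathcal{L}(x)=0$ straight.
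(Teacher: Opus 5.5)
Your proof is correct. Your inclusion $\mathcal{A}_x \subseteq \mathcal{B}_x$ is exactly the paper's: Lemma~\ref{lemma_chain:2} with $j=i$, followed by $e_{[n]\setminus[i]}e_{[i]} = e_{[n]} = f$. For $\mathcal{B}_x \subseteq \mathcal{A}_x$ you take a genuinely different route.

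The paper argues pointwise and purely algebraically. It applies the endomorphism $\overline{\partial}_{[i]}$ to $x e_{[i]} = f$. Using $\overline{\partial}_{[i]}(e_{[i]}) = e$ and $\overline{\partial}_{[i]}(f) = e_{[n]\setminus[i]}$ (Lemma~\ref{lemma:0}), it gets $\overline{\partial}_{[i]}(x) = e_{[n]\setminus[i]}$ in one line.

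You instead use Corollary~\ref{RL2} to identify $\mathcal{A}_x$ as the up-set $\{\mathcal{L}(x),\dots,n\}$. You then track the level along $x a_i a_{i-1}\cdots a_1$ with Theorem~\ref{RL_formula} and Lemma~\ref{RU1}. This is valid, since none of those results depends on Theorem~\ref{a_b_theorem}.

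It is, however, much heavier. Theorem~\ref{RL_formula} rests on Lemma~\ref{L3}, and hence on Proposition~\ref{ct2} imported from earlier work. The paper's argument needs only that $\overline{\partial}_{[i]}$ is a homomorphism, and it does not require knowing the shape of $\mathcal{A}_x$ in advance.

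Your route does buy extra information. It shows $\mathcal{L}(xe_{[i]}) = \mathcal{L}(x)$ for $i < \mathcal{L}(x)$. Running the same tracking for $i \ge \mathcal{L}(x)$, the indices pass through $\mathcal{L}(x), \mathcal{L}(x)-1, \dots, 1$ in order, so the level drops to $0$. Hence Theorem~\ref{RL_formula} together with Lemma~\ref{RU1} already gives $\mathcal{B}_x = \{\mathcal{L}(x),\dots,n\}$, i.e.\ both inclusions at once, without Lemma~\ref{lemma_chain:2}.
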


\begin{proof}
    If $x e_{[i]} = f$, then by applying $\overline{\partial}_{[i]}$ and using $\overline{\partial}_{[i]}(e_{[i]}) = e$, we obtain
    \[
    \overline{\partial}_{[i]}(x) = \overline{\partial}_{[i]}(x) \overline{\partial}_{[i]}(e_{[i]}) = \overline{\partial}_{[i]}(x e_{[i]}) = \overline{\partial}_{[i]}(f) = e_{[n] \setminus [i]},
    \]
    and hence $\mathcal{B}_x \subseteq \mathcal{A}_x$.

    If $\overline{\partial}_{[i]}(x) = e_{[n] \setminus [i]}$, then Lemma~\ref{lemma_chain:2} with $j = i$ implies 
    \[
    x e_{[i]} = \overline{\partial}_{[i]}(x) e_{[i]} = e_{[n] \setminus [i]} e_{[i]} = e_{[n]} = f,
    \] 
    which proves $\mathcal{A}_x \subseteq \mathcal{B}_x$.
    Hence $\mathcal{A}_x = \mathcal{B}_x$.
\end{proof}

Recall the function $m : K_n \to \{0, 1, \dots, n\}$ defined in~\eqref{m_definition}.
\begin{theorem}\label{ml_theorem}
    We have that $\mathcal{L} = m$.
\end{theorem}

\begin{proof}
    Let $x \in K_n$.
    By the definition of $\mathcal{L}(x)$, we have $\mathcal{L}(x) = \min \mathcal{A}_x$.
    By the definition of $m(x)$, we have $m(x) = \min \mathcal{B}_x$.
    By Theorem~\ref{a_b_theorem}, we have $\mathcal{A}_x = \mathcal{B}_x$, and hence $\mathcal{L}(x) = m(x)$.
\end{proof}

%----------------------------
\section{Sequences of Products}\label{INF}

%-----------------------------------
\subsection{Sequences of Partial Products}\label{FINITE}
In this subsection, we study sequences of partial products in $K_n$.
We prove that every such sequence is eventually constant, and in certain cases we determine its eventual value.

Let $(x_j)_{j \geq 1}$ be a sequence with values in $\{a_1, a_2, \dots, a_n\}$.
We define the sequence of partial products $(s_j)_{j \geq 0}$ by
\[
s_j = x_1 x_2 \dots x_j
\]
for $j \geq 1$ and set $s_0 = e$.
Then $s_1 = x_1$ and $s_j = s_{j-1} x_j$ for $j \geq 1$.

\begin{proposition}\label{FS}
    For every sequence $(x_j)_{j \geq 1}$ with values in $\{a_1, a_2, \dots, a_n\}$, there exists $m_0 \in \mathbb{N}$ such that
    \[
    s_m = s_{m_0},
    \]
    for all $m \geq m_0$.
\end{proposition}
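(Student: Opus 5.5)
The plan is to use the height function $h$ from Lemma~\ref{lemma:height}. I would apply it to the antiautomorphism $\tau$, so that it becomes a statement about right multiplication. The partial products satisfy $s_j = s_{j-1} x_j$. Lemma~\ref{lemma:height} as stated controls left multiplication: if $xy \neq y$, then $h(xy) < h(y)$.

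Set $t_j = \tau(s_j)$. Since $\tau$ is an antiautomorphism, $t_j = \tau(x_j)\, t_{j-1}$, where $\tau(x_j)$ is again a generator. Lemma~\ref{lemma:height} with $x = \tau(x_j)$ and $y = t_{j-1}$ then gives a dichotomy at each step: either $t_j = t_{j-1}$, or $h(t_j) < h(t_{j-1})$. Hence the sequence $h(t_j)$ is nonincreasing in $\mathbb{N}_0$. It strictly decreases at every index $j$ with $s_j \neq s_{j-1}$, because $\tau$ is a bijection.

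Since $h(t_j) \ge 0$ and $h(t_0) = h(e)$ is finite, there are at most $h(e)$ indices $j$ with $s_j \neq s_{j-1}$. Let $m_0$ be the largest such index, or $m_0 = 1$ if there is none (also if $m_0 = 0$). For all $m \ge m_0$ we then have $s_m = s_{m-1} = \dots = s_{m_0}$, which is the claim.

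An alternative route that avoids $h$ would use the finiteness of $K_n$ together with an $\mathcal{R}$-order argument, but the height argument above is cleaner. The main obstacle is making sure the direction of multiplication matches Lemma~\ref{lemma:height}; passing through $\tau$ resolves this. If one wanted to avoid $\tau$, one could instead try to show directly that $s_{j-1}x_j \neq s_{j-1}$ forces a strict decrease of some invariant such as $\mathcal{L}$. That fails, however, since $\mathcal{L}$ can stay constant while $s_j$ changes, so I would commit to the $\tau$-transported height argument.
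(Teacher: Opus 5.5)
Your proof is correct and follows the paper's own argument: you pass the right multiplication $s_j = s_{j-1}x_j$ through the antiautomorphism $\tau$, apply Lemma~\ref{lemma:height} to get $h(\tau(s_j)) < h(\tau(s_{j-1}))$ whenever $s_j \neq s_{j-1}$, and conclude from $h$ taking values in $\mathbb{N}_0$ that only finitely many changes occur. One minor point: the ``no such index'' case you provide for never arises, since $s_1 = x_1 \neq e = s_0$, but including it does no harm.
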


\begin{proof}
    Suppose $s_j \neq s_{j-1}$ for some $j \geq 1$.
    Since $s_j = s_{j-1} x_j$, we have
    \[
    s_{j-1} x_j \neq s_{j-1}.
    \]
    Applying the antiautomorphism $\tau$, we obtain
    \[
    \tau(x_j) \tau(s_{j-1}) \neq \tau(s_{j-1}).
    \]
    Applying Lemma~\ref{lemma:height} with $x = \tau(x_j)$ and $y = \tau(s_{j-1})$, we get
    \[
    h(\tau(s_j)) = h(\tau(x_j) \tau(s_{j-1})) < h(\tau(s_{j-1})).
    \]

    Since $h$ takes values in $\mathbb{N}_0$, there can only be finitely many indices $j \geq 1$ such that $s_j \neq s_{j-1}$.
    This concludes the proof.
\end{proof}

For a sequence $(x_j)_{j \geq 1}$, we define
\[
M_{(x_j)} = \{i \in [n] \mid x_k = a_i \text{ for some } k \geq 1\},
\]
and
\[
N_{(x_j)} = \{i \in [n] \mid x_k = a_i \text{ for infinitely many } k \geq 1\}.
\]
Clearly, $N_{(x_j)} \subseteq M_{(x_j)}$.

\begin{theorem}\label{TFS}
    Let $(x_j)_{j \geq 1}$ be a sequence with values in $\{a_1, a_2, \dots, a_n\}$, and assume that $N_{(x_j)} = M_{(x_j)}$.
    Then there exists $m_0 \in \mathbb{N}$ such that
    \[
    s_m = e_{M_{(x_j)}},
    \]
    for all $m \geq m_0$.
\end{theorem}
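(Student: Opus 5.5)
The plan is to combine Proposition~\ref{FS} with the description of idempotents from Subsection~\ref{IDE}. Write $M = M_{(x_j)}$. Since the sequence $(x_j)_{j \geq 1}$ is infinite, $M \neq \emptyset$.

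First, by Proposition~\ref{FS}, choose $m_1 \in \mathbb{N}$ such that $s_m = s_{m_1}$ for all $m \geq m_1$, and write $s = s_{m_1}$. Every $i \in M$ occurs among $x_1, x_2, \dots$, so there is an index $m_2$ such that each $i \in M$ occurs among $x_1, \dots, x_{m_2}$. Set $m_0 = \max\{m_1, m_2\}$. Then $s = s_{m_0} = x_1 x_2 \dots x_{m_0}$ is a product of generators $a_i$ with $i \in M$, and every $i \in M$ actually occurs in it. Hence $c(s) = M$ and $s \in \langle a_i \mid i \in M \rangle$.

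Second, I will show that $s$ is a right fixed point for every generator in $M$. Let $i \in M$. By the hypothesis $N_{(x_j)} = M_{(x_j)}$, there is some $j > m_0$ with $x_j = a_i$. Then
\[
s = s_j = s_{j-1} x_j = s a_i .
\]
So $s a_i = s$ for all $i \in M$.

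Third, I will show that $s$ is idempotent. Write $s = a_{i_1} a_{i_2} \dots a_{i_k}$ with all $i_t \in M$, which is possible by the first step. Absorbing the letters of the right-hand factor one at a time, the second step gives
\[
s \cdot s = s a_{i_1} a_{i_2} \dots a_{i_k} = s a_{i_2} \dots a_{i_k} = \dots = s .
\]
Hence $s^k = s$ for every $k \geq 1$. Lemma~\ref{idempotent:power} gives $s^k = e_{c(s)}$ for $k \geq |c(s)|$, so $s = e_{c(s)} = e_M$. Since $s_m = s$ for all $m \geq m_0$, this proves the theorem.

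The only delicate point is making sure that $c(s) = M$ exactly, rather than only $c(s) \subseteq M$. That is why $m_0$ is taken large enough for every letter of $M$ to have already appeared in the prefix. Everything else follows directly from the absorption identity $s a_i = s$, which is exactly where the hypothesis $N_{(x_j)} = M_{(x_j)}$ is used.
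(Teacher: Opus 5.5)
Your proof is correct and takes essentially the same route as the paper: eventual constancy from Proposition~\ref{FS}, the absorption identity $s a_i = s$ for $i \in M_{(x_j)}$ derived from $N_{(x_j)} = M_{(x_j)}$, idempotency of $s$, and then Lemma~\ref{idempotent:power}. The only difference is how you get $c(s) = M_{(x_j)}$: you enlarge $m_0$ until every letter has appeared in the prefix, whereas the paper obtains $M_{(x_j)} \subseteq c(s_{m_0})$ by applying the content homomorphism to $s_{m_0} = s_{m_0} a_{i_1} \cdots a_{i_l}$.
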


\begin{proof}
    By Proposition~\ref{FS}, there exists $m_0 \in \mathbb{N}$ such that $s_m = s_{m_0}$ for all $m \geq m_0$.
    Fix $k \in M_{(x_j)}$.
    Since $M_{(x_j)} = N_{(x_j)}$, there exists $i > m_0$ such that $x_i = a_k$.
    Since $i-1 \geq m_0$, we have
    \[
    s_{m_0} = s_{i} = s_{i-1} x_i = s_{m_0} a_k.
    \]
    Since $k \in M_{(x_j)}$ is arbitrary, we have 
    \[
    s_{m_0} a_k = s_{m_0},
    \]
    for all $k \in M_{(x_j)}$.
    Hence, for all $i_1, i_2, \dots, i_l \in M_{(x_j)}$, we have 
    \begin{align}
        s_{m_0} &= s_{m_0} a_{i_l} \nonumber \\
                &= s_{m_0} a_{i_{l-1}} a_{i_l} \nonumber \\
                &= \cdots \nonumber \\
                &= s_{m_0} a_{i_1} a_{i_2} \dots a_{i_l}. \label{z0}
    \end{align}
    Using the content map yields
    \[
    c(s_{m_0}) = c(s_{m_0}) \cup \{i_1, i_2, \dots, i_l\},
    \]
    and hence $M_{(x_j)} \subseteq c(s_{m_0})$.
    By the definition of $M_{(x_j)}$, we have $c(s_{m_0}) \subseteq M_{(x_j)}$, and hence $c(s_{m_0}) = M_{(x_j)}$.
    Hence, using~\eqref{z0}, we obtain 
    \[
    s_{m_0} = s_{m_0}^{k'},
    \]
    for all $k' \geq 1$.
    Since $c(s_{m_0}) = M_{(x_j)}$ and $M_{(x_j)} \neq \emptyset$, by Lemma~\ref{idempotent:power}, we further have 
    \[
    s_{m_0} = s_{m_0}^{|c(s_{m_0})|} = e_{M_{(x_j)}},
    \]
    which concludes the proof.
\end{proof}

\begin{corollary}
    Let $(x_j)_{j \geq 1}$ be a sequence with values in $\{a_1, a_2, \dots, a_n\}$ and assume that $N_{(x_j)} = [n]$.
    Then there exists $m_0 \in \mathbb{N}$ such that
    \[
    s_m = f,
    \]
    for all $m \geq m_0$.
\end{corollary}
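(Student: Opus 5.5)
This is a direct specialisation of Theorem~\ref{TFS}. The plan is to check that its hypothesis $N_{(x_j)} = M_{(x_j)}$ holds, identify the resulting idempotent, and read off the conclusion.

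\emph{Step 1: the hypothesis of Theorem~\ref{TFS}.} We always have $N_{(x_j)} \subseteq M_{(x_j)} \subseteq [n]$. By assumption $N_{(x_j)} = [n]$, so the chain of inclusions collapses and $M_{(x_j)} = N_{(x_j)} = [n]$.

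\emph{Step 2: apply Theorem~\ref{TFS}.} It gives some $m_0 \in \mathbb{N}$ such that
\[
s_m = e_{M_{(x_j)}} = e_{[n]}
\]
for all $m \geq m_0$.

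\emph{Step 3: identify the limit.} By the observation recalled in Subsection~\ref{IDE} from~\cite[Remark 16]{kudryavtseva}, $e_{[n]}$ is the zero element $f$ of $K_n$. Hence $s_m = f$ for all $m \geq m_0$.

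I expect no real obstacle here. The only point to verify is that $M_{(x_j)}$ is nonempty, which Theorem~\ref{TFS} needs when it applies Lemma~\ref{idempotent:power}. This holds because the sequence is infinite and takes values in $\{a_1, a_2, \dots, a_n\}$; in the present case it is immediate anyway, since $M_{(x_j)} = [n]$ and $n > 1$.

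As an independent check, one could instead use the level function. Along the partial products, $\mathcal{L}(s_j)$ starts at $\mathcal{L}(e) = n$. By Theorem~\ref{RL_formula}, it decreases by one exactly when the letter $a_{\mathcal{L}(s_{j-1})}$ appears, and otherwise stays the same. Since every generator occurs infinitely often, the level reaches $0$ after finitely many steps, and by Lemma~\ref{RU1} this means $s_j = f$. Once $s_j = f$, it remains $f$ because $f$ is the zero element. The direct route through Theorem~\ref{TFS} is the shorter one, so that is the one I would present.
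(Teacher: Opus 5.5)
Your proposal is correct and follows the paper's proof exactly: from $N_{(x_j)} \subseteq M_{(x_j)} \subseteq [n]$ you get $M_{(x_j)} = N_{(x_j)} = [n]$, and Theorem~\ref{TFS} then gives eventual value $e_{[n]} = f$. You state the identification $e_{[n]} = f$ explicitly where the paper leaves it implicit. Your alternative level-function argument is also valid, but it is not needed.
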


\begin{proof}
    If $N_{(x_j)} = [n]$, then $N_{(x_j)} = M_{(x_j)} = [n]$.
    The result then follows from Theorem~\ref{TFS}.
\end{proof}

\subsection{Sequences of Random Partial Products}\label{RANDOM}
In Subsection~\ref{FINITE}, we showed that every sequence of partial products in $K_n$ is eventually constant.
In this subsection, we study such sequences in a probabilistic setting
and investigate the rate at which they attain their final value.

Let $(\Omega, \mathcal{F}, \mathbb{P})$ be a probability space, and let $(X_j)_{j \geq 1}$ be
a sequence of random variables taking values in $\{a_1, a_2, \dots, a_n\}$.
For each $j \geq 1$, let $I_j$ be the random variable with values in $[n]$ such that
\[
X_j = a_{I_j}.
\]

Let $(P_j)_{j \geq 0}$ be random variables taking values in $K_n$, defined by $P_0 = e$ and
\[
P_j = X_1 X_2 \dots X_j = P_{j-1} X_j,
\]
for $j \geq 1$, where the product is taken in $K_n$.
We define the hitting time of state $f$ of the random process $(P_j)_{j \geq 0}$ as 
\[
T = \inf \{j \geq 0 \mid P_j = f\}.
\]
The goal of this subsection is to understand the random variable $T$.
Also observe that if $P_{j_0} = f$ for some $j_0 \geq 1$, then $P_j = f$ for all $j \geq j_0$.
In other words, if the process $(P_j)_{j \geq 0}$ reaches state $f$, then it remains there indefinitely.

We define random variables $(L_j)_{j \geq 0}$ with values in $\{0, 1, \dots, n\}$ by 
\[
L_j = \mathcal{L}(P_j),
\]
for all $j \geq 0$.
Then~\eqref{L-g_formula} implies that, for each $j \geq 1$,
\begin{equation}\label{L_recursion}
    L_j = \mathcal{L}(P_j) = \mathcal{L}(P_{j-1}X_j) = \mathcal{L}(P_{j-1}a_{I_j}) = g(\mathcal{L}(P_{j-1}), I_j) = g(L_{j-1}, I_j).
\end{equation}

For each $i \in [n]$, set
\[
p_i = \mathbb{P}(X_1 = a_i)
\qquad \text{and} \qquad
q_i = \mathbb{P}(X_1 \neq a_i) = 1 - p_i.
\]

\begin{theorem}\label{MC:2}
    Assume that $(X_j)_{j \geq 1}$ are independent and identically distributed.
    Then $(L_j)_{j \geq 0}$ is a Markov chain with state space $\{0, 1, \dots, n\}$.
    The initial distribution vector $\pi$ is
    \[
    \pi = (0, 0, \dots, 0, 1),
    \]
    and the transition probability matrix $\mathcal{P}$ is given by 
    \begin{equation}\label{transition:matrix}
    \mathcal{P} =
    \begin{pmatrix}
    1      & 0      & 0      & \cdots & \cdots & 0       & 0      \\
    p_1    & q_1    & 0      & \cdots & \cdots & 0       & 0      \\
    0      & p_2    & q_2    & \cdots & \cdots & 0       & 0      \\
    \vdots & \vdots & \ddots & \ddots & \vdots & \vdots  & \vdots \\
    \vdots & \vdots & \vdots & \ddots & \ddots & \vdots  & \vdots \\
    0      & 0      & 0      & \cdots & p_{n-1} & q_{n-1} & 0      \\
    0      & 0      & 0      & \cdots & 0        & p_n     & q_n
    \end{pmatrix},
    \end{equation}
    where the rows and columns are indexed by $0, 1, \dots, n$.
\end{theorem}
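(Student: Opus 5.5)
The plan is to exploit the recursion~\eqref{L_recursion}, $L_j = g(L_{j-1}, I_j)$, which expresses each $L_j$ as a deterministic function of the previous state and a fresh innovation. The Markov property will then follow from the standard fact that a process driven by i.i.d.\ innovations independent of the past is a time-homogeneous Markov chain.

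First, the initial distribution. Since $P_0 = e$ and $c(e) = \emptyset \subseteq [n-1]$, Lemma~\ref{L_equal_n} gives $L_0 = \mathcal{L}(e) = n$ almost surely. Hence $\pi = (0, \dots, 0, 1)$.

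Next, the Markov property. Iterating~\eqref{L_recursion}, each $L_k$ with $k \le j-1$ is a measurable function of $(I_1, \dots, I_{j-1})$. Because the $X_j$ are independent, $I_j$ is independent of $\sigma(I_1, \dots, I_{j-1})$, and hence of $(L_0, \dots, L_{j-1})$. Take states $l_0, \dots, l_{j-1}, l$ with $\mathbb{P}(L_0 = l_0, \dots, L_{j-1} = l_{j-1}) > 0$. By independence,
\[
\mathbb{P}(L_j = l \mid L_0 = l_0, \dots, L_{j-1} = l_{j-1}) = \mathbb{P}(g(l_{j-1}, I_j) = l) = \mathbb{P}(g(l_{j-1}, I_1) = l),
\]
where the last equality uses that the $X_j$ are identically distributed. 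This quantity depends only on $l_{j-1}$ and $l$, not on $j$. So $(L_j)$ is a time-homogeneous Markov chain with transition probabilities $\mathcal{P}_{k,l} = \mathbb{P}(g(k, I_1) = l)$.

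Finally, we compute these entries from Definition~\ref{g_definition}.
\begin{itemize}
\item For $k = 0$: since $I_1 \in [n]$, we always have $I_1 \neq 0$, so $g(0, I_1) = 0$. The row for state $0$ is therefore $(1, 0, \dots, 0)$.
\item For $k \in [n]$: $g(k, I_1) = k - 1$ exactly when $I_1 = k$, which has probability $p_k$. Otherwise $g(k, I_1) = k$, which has probability $q_k$. All other entries in the row vanish.
\end{itemize}
This matches the matrix~\eqref{transition:matrix}.

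The only point needing care is the measure-theoretic justification that $I_j$ is independent of the past chain. This is immediate once $L_0, \dots, L_{j-1}$ are written as functions of $I_1, \dots, I_{j-1}$ (with $L_0$ constant). There is no real obstacle, because all the algebraic work is already contained in Theorem~\ref{RL_formula}.
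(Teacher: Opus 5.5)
Your proposal is correct and follows essentially the same route as the paper. You use the recursion $L_j = g(L_{j-1}, I_j)$ with $I_j$ independent of $L_0,\dots,L_{j-1}$ to get the Markov property, read off $\mathcal{P}_{k,l} = \mathbb{P}(g(k, I_1) = l)$, compute the entries from the definition of $g$, and use $L_0 = \mathcal{L}(e) = n$ for $\pi$; your explicit conditional-probability computation simply spells out the paper's one-line justification of the Markov property.
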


\begin{proof}
    Since $(X_j)_{j \geq 1}$ are independent and identically distributed, so are $(I_j)_{j \geq 1}$.
    Since for each $j \geq 1$, $I_j$ is independent of $X_1, X_2, \dots, X_{j-1}$ and $L_0, L_1, \dots, L_{j-1}$ are functions of $X_1, X_2, \dots, X_{j-1}$, it follows that $I_j$ is independent of $L_0, L_1, \dots, L_{j-1}$.
    Hence,~\eqref{L_recursion} implies that $(L_j)_{j \geq 0}$ is a Markov chain.

    Since $L_0 = \mathcal{L}(P_0) = \mathcal{L}(e) = n$ is constant, we have $\mathbb{P}(L_0 = n) = 1$, and hence the initial distribution vector $\pi$ is 
    \[
    \pi = (0, 0, \dots, 0, 1).
    \]

    From~\eqref{L_recursion} and the fact that $(I_j)_{j \geq 1}$ are identically distributed, it also follows that the transition probability from state $i$ to state $j$ is given by 
    \begin{equation}\label{tr:prob}
        \mathcal{P}_{i, j} = \mathbb{P}(g(i, I_1) = j).
    \end{equation}
    If $i = 0$, then $g(i, I_1) = 0$, since $I_1$ takes values in $[n]$, and hence $\mathcal{P}_{0, 0} = 1$.
    Now assume that $i > 0$.
    By the definition of $g$, we can write
    \[
    g(i, I_1) = i - \mathbbm{1}_{\{I_1 = i\}}.
    \]
    Hence
    \begin{align*}
        \mathcal{P}_{i, j} &= \mathbb{P}(g(i, I_1) = j) \\
            &= \mathbb{P}(i - \mathbbm{1}_{\{I_1 = i\}} = j) \\
            &= \mathbb{P}(\mathbbm{1}_{\{I_1 = i\}} = i - j).
    \end{align*}
    Since $\{I_1 = i\} = \{X_1 = a_i\}$, we get
    \[
    \mathcal{P}_{i, j} = \mathbb{P}(\mathbbm{1}_{\{I_1 = i\}} = i - j) = \mathbb{P}(\mathbbm{1}_{\{X_1 = a_i\}} = i - j).
    \]
    By noting that $\mathbbm{1}_{\{X_1 = a_i\}}$ takes values in $\{0, 1\}$ with probability $1$, we further have
    \[
    \mathcal{P}_{i, j}
    = 
    \begin{cases}
        0, & i - j \notin \{0, 1\}, \\
        \mathbb{P}(\mathbbm{1}_{\{X_1 = a_i\}} = 1), & i-j = 1, \\
        \mathbb{P}(\mathbbm{1}_{\{X_1 = a_i\}} = 0), & i-j = 0,
    \end{cases}
    = 
    \begin{cases}
        0, & i - j \notin \{0, 1\}, \\
        \mathbb{P}(X_1 = a_i), & i-j = 1, \\
        \mathbb{P}(X_1 \neq a_i), & i-j = 0.
    \end{cases}
    \]
    Therefore,
    \[
    \mathcal{P}_{i, j} = 
    \begin{cases}
        0, & j > i \text{ or } j < i - 1, \\
        p_i, & j = i - 1, \\
        q_i, & j = i.
    \end{cases}
    \]
    This proves~\eqref{transition:matrix} and concludes the proof.
\end{proof}

For $i \in \{0, 1, \dots, n\}$, define 
\[
T_i = \inf \{j \geq 0 \mid L_j = i\},
\]
and also set
\[
V_i = T_{i-1} - T_i,
\]
for $i = 1, 2, \dots, n$.
In the following proof, we show that all $V_i$ are well-defined under the additional assumption 
$p_j > 0$ for all $j = 1, 2, \dots, n$.
 
\begin{proposition}\label{prop:geom}
    Assume that $(X_j)_{j \geq 1}$ are independent and identically distributed and assume $p_i > 0$ for all $i = 1, 2, \dots, n$.
    Then $V_n, V_{n-1}, \dots, V_1$ are independent and $V_i$ is a geometric random variable with success probability $p_i$.
\end{proposition}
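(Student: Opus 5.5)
The plan is to work directly with the recursion~\eqref{L_recursion}, $L_j = g(L_{j-1}, I_j)$, and avoid general Markov chain machinery. Since the chain starts at $L_0 = n$ and by Theorem~\ref{MC:2} can only stay put or step down by one, the level hits every value $n, n-1, \dots, 0$ in order. Thus $T_n = 0$, and
\[
T_{i-1} = \inf\{ j > T_i \mid I_j = i\}
\]
whenever $T_i < \infty$. The reason is that, while $L_{j-1} = i > 0$, we have $L_j = i - 1$ exactly when $I_j = i$, and $L_j = i$ otherwise. So I will first prove by downward induction on $i$ that each $T_i$ is almost surely finite. This makes every $V_i$ well-defined. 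I will also record the explicit description
\[
V_i = \inf\{ k \geq 1 \mid I_{T_i + k} = i\}.
\]
Finiteness reduces to the following fact: given $T_i = t$, the indices $I_{t+1}, I_{t+2}, \dots$ avoid $i$ forever with probability $\lim_k q_i^k = 0$, because $p_i > 0$.

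The core step is to show, for all $k_n, \dots, k_1 \geq 1$,
\[
\mathbb{P}(V_n = k_n, \dots, V_1 = k_1) = \prod_{i=1}^n q_i^{k_i - 1} p_i .
\]
Fix such values and put $t_n = 0$ and $t_{i-1} = t_i + k_i$. The event $\{V_n = k_n, \dots, V_1 = k_1\}$ coincides with the event that, for every $i \in [n]$, we have $I_{t_i + s} \neq i$ for $1 \leq s < k_i$ and $I_{t_i + k_i} = i$. This can be checked inductively using the description of $T_{i-1}$ above: on this event $T_i = t_i$ for every $i$, and conversely. The event is then an intersection of conditions on distinct indices $I_1, \dots, I_{t_0}$. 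By independence and identical distribution its probability is the stated product, since $\mathbb{P}(I_j \neq i) = q_i$ and $\mathbb{P}(I_j = i) = p_i$.

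Summing over all but one coordinate shows that each $V_i$ has the geometric distribution with parameter $p_i$. The product formula for the joint law then gives independence.

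The main obstacle is the bookkeeping in the identification of events. Each $V_i$ is defined through the random time $T_i$, so one has to argue carefully that fixing the previous increments determines $T_i$ deterministically. I will handle this by the explicit induction on $i$ from $n$ down to $1$ described above, which avoids any appeal to the strong Markov property.
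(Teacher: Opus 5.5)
Your proof is correct. Its central computation is the same as the paper's: the event $\{V_n = k_n, \dots, V_1 = k_1\}$ is a single prescribed trajectory of the level process, and its probability is $\prod_{i=1}^n q_i^{k_i-1}p_i$.

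The two arguments differ in how the pieces are assembled:
\begin{itemize}
\item The paper first obtains each marginal $\mathbb{P}(V_i = k) = q_i^{k-1}p_i$ from the Markov chain of Theorem~\ref{MC:2} via the strong Markov property. It then evaluates the joint probability as a product of transition probabilities along the path and matches it against the product of marginals.
\item You stay with the i.i.d.\ indices $(I_j)$. You use $T_{i-1} = \inf\{j > T_i \mid I_j = i\}$ to identify the joint event with a cylinder event on the disjoint blocks $(t_i, t_{i-1}]$. You then compute the joint law directly from independence and read off the marginals by summing, so the strong Markov property is never needed.
\end{itemize}

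The paper's version is shorter and fits the chain framework it has just set up. Yours is fully elementary, and it makes explicit the event identifications that the paper only describes verbally (``remains there for exactly $k-1$ steps'').

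Your product formula also gives finiteness for free. The events $\{T_i = t_i \text{ for all } i\}$ are disjoint as $(k_n,\dots,k_1)$ varies, and their union is $\{T_0 < \infty\}$. Summing $\prod_i q_i^{k_i-1}p_i$ over all $k_i \geq 1$ gives $1$, so $\mathbb{P}(T_0 < \infty) = 1$. Your separate downward induction for almost sure finiteness could therefore be dropped.
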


\begin{proof}
    By Theorem~\ref{MC:2}, $(L_j)_{j \geq 0}$ is a Markov chain with transition probability matrix $\mathcal{P}$.
    By~\eqref{L_recursion}, we have
    \[
    L_j \leq L_{j-1} \quad \text{and} \quad L_{j-1} - L_j \in \{0, 1\},
    \]
    for all $j \geq 1$.
    Hence at each step, the chain either remains in the previous state or decreases its state by exactly one.
    Since $p_i = \mathcal{P}_{i, i-1} > 0$, we have $\mathcal{P}_{i, i} = q_i < 1$, and thus $\lim_{k \to \infty} \mathcal{P}_{i, i}^k = 0$.
    Hence the probability that the chain stays at state $i \in [n]$ indefinitely is zero.
    Hence, every $T_i$ is almost surely finite.
    For $i \in [n]$ and $k \geq 1$, we further have
    \[
    \{V_i = k\} = \{T_{i-1} -T_i = k\}
    \]
    is the event that once $(L_j)_{j \geq 0}$ reaches state $i$, it remains there for exactly $k-1$ steps, and then jumps to state $i-1$.
    Therefore, by the strong Markov property, we have
    \begin{equation}\label{V:geom}
        \mathbb{P}(V_i = k) = (\mathcal{P}_{i, i})^{k-1} \mathcal{P}_{i, i-1} = q_i^{k-1} p_i.
    \end{equation}
    Hence $V_i$ is a geometric random variable with success probability $p_i$.

    Now we show independence.
    Let $k_n, k_{n-1}, \dots, k_1 \geq 1$.
    Then the event 
    \[
    \{V_n = k_n, V_{n-1} = k_{n-1}, \dots, V_1 = k_1\}
    \]
    is the event that $(L_j)_{j \geq 0}$ starts in state $n$, remains there for $k_n-1$ steps, and then jumps down to state $n-1$.
    Then it stays at state $n-1$ for $k_{n-1} - 1$ steps, and then it jumps down to state $n-2$, and so on.
    When it reaches state $1$, it remains there for $k_1 - 1$ steps and then jumps to $0$.
    Hence, $\mathbb{P}(V_n = k_n, V_{n-1} = k_{n-1}, \dots, V_1 = k_1)$ is equal to 
    \[
    (\mathcal{P}_{n, n})^{k_n -1} \mathcal{P}_{n, n-1} (\mathcal{P}_{n-1, n-1})^{k_{n-1} -1} \mathcal{P}_{n-1, n-2} \dots (\mathcal{P}_{1, 1})^{k_1 -1} \mathcal{P}_{1, 0},
    \]
    which is by~\eqref{V:geom} equal to
    \[
    \mathbb{P}(V_n = k_n) \mathbb{P}(V_{n-1} = k_{n-1}) \dots \mathbb{P}(V_1 = k_1).
    \]
    Hence, $V_n, V_{n-1}, \dots, V_1$ are independent as well.
\end{proof}

\begin{theorem}\label{L_hitting_time}
    Assume that $(X_j)_{j \geq 1}$ are independent and identically distributed and assume $p_i > 0$ for all $i = 1, 2, \dots, n$.
    Then $T$ is distributed as a sum of $n$ independent geometric random variables with success probabilities $p_1, p_2, \dots, p_n$.
\end{theorem}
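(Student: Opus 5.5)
The plan is to identify $T$ with the hitting time $T_0$ of state $0$ by the chain $(L_j)_{j \geq 0}$. Then $T_0$ telescopes into the sum $V_n + V_{n-1} + \dots + V_1$, and Proposition~\ref{prop:geom} finishes the argument.

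First, I would show $T = T_0$ pointwise. By Lemma~\ref{RU1}, $P_j = f$ holds if and only if $\mathcal{L}(P_j) = 0$, that is, if and only if $L_j = 0$. The sets $\{j \geq 0 \mid P_j = f\}$ and $\{j \geq 0 \mid L_j = 0\}$ therefore coincide, and so do their infima.

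Second, I would check that $T_n = 0$ and that each $T_i$ is almost surely finite. The first holds because $L_0 = \mathcal{L}(e) = n$. The second was already shown in the proof of Proposition~\ref{prop:geom}: the chain never increases and decreases by at most one per step, so it visits every state $n, n-1, \dots, 0$. Telescoping then gives, almost surely,
\[
T = T_0 = T_0 - T_n = \sum_{i=1}^{n} (T_{i-1} - T_i) = \sum_{i=1}^{n} V_i .
\]

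Finally, Proposition~\ref{prop:geom} says that $V_1, \dots, V_n$ are independent and that each $V_i$ is geometric with success probability $p_i$. Hence $T$ has the distribution of a sum of $n$ independent geometric random variables with parameters $p_1, \dots, p_n$.

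There is no genuine obstacle here. The only point needing care is that the identity $T = \sum_i V_i$ holds almost surely rather than everywhere, because of the null event on which some $T_i$ is infinite. This does not affect the distribution of $T$. As a remark, in the uniform case this gives $\mathbb{E}[T] = \sum_{i=1}^{n} 1/p_i = n \cdot n = n^2$.
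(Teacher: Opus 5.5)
Your proof is correct and matches the paper's argument: you identify $T = T_0$ via Lemma~\ref{RU1}, telescope using $T_n = 0$, and apply Proposition~\ref{prop:geom}. Your almost-sure caveat is slightly more careful than the paper's unqualified identity, but note that the almost-sure finiteness of the $T_i$ comes from $p_i > 0$ (so $q_i < 1$), not merely from the chain being monotone with unit steps.
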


\begin{proof}
    Since $L_j = \mathcal{L}(P_j)$ for $j \geq 0$, Lemma~\ref{RU1} implies 
    \[
    P_j = f \iff L_j = 0,
    \]
    which implies
    \[
    T = \inf \{j \geq 0 \mid P_j = f\} = \inf \{j \geq 0 \mid L_j = 0\} = T_0.
    \]
    Since $L_0 = n$, we have $T_n = 0$, and hence
    \[
    T = T_0 = \sum_{i=1}^n (T_{i-1} -T_i) = \sum_{i=1}^n V_i.
    \]
    By Proposition~\ref{prop:geom}, the random variables $V_n, V_{n-1}, \dots, V_1$ are independent and $V_i$ is a geometric random variable with success probability $p_i$.
    This finishes the proof.
\end{proof}

\begin{corollary}
    Assume that $(X_j)_{j \geq 1}$ are independent and identically distributed and assume $p_i > 0$ for all $i = 1, 2, \dots, n$.
    Then
    \[
    \mathbb{E}[T] = \sum_{i=1}^n p_i^{-1}.
    \]
    In particular, if $(X_j)_{j \geq 1}$ are uniformly distributed on $\{a_1, a_2, \dots, a_n\}$, then
    \[
    \mathbb{E}[T] = n^2.
    \]
\end{corollary}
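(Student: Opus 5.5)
The corollary will follow from Theorem~\ref{L_hitting_time} together with linearity of expectation. By Theorem~\ref{L_hitting_time}, and more precisely by its proof, we have $T = \sum_{i=1}^n V_i$ almost surely. By Proposition~\ref{prop:geom}, each $V_i$ is geometric with success probability $p_i > 0$, so $\mathbb{P}(V_i = k) = q_i^{k-1} p_i$ for $k \geq 1$. Independence of the $V_i$ is not needed for the expectation; linearity alone suffices.

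The first step is to compute $\mathbb{E}[V_i]$. We use the standard identity
\[
\sum_{k \geq 1} k\, q^{k-1} = (1-q)^{-2}
\]
for $0 \leq q < 1$. Since $p_i > 0$, we have $q_i < 1$, so the series converges and
\[
\mathbb{E}[V_i] = p_i \sum_{k\geq 1} k q_i^{k-1} = p_i \cdot p_i^{-2} = p_i^{-1}.
\]
All the $V_i$ are nonnegative and have finite expectation, so summing over $i$ gives
\[
\mathbb{E}[T] = \sum_{i=1}^n \mathbb{E}[V_i] = \sum_{i=1}^n p_i^{-1}.
\]

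For the uniform case, $p_i = 1/n$ for every $i$, so each term $p_i^{-1}$ equals $n$ and the sum equals $n \cdot n = n^2$.

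There is no real obstacle here. The one point to handle with some care is that $T$ is almost surely finite and equal to $\sum_i V_i$, which is established in the proof of Proposition~\ref{prop:geom}. Once that is in hand, the expectation is a routine computation.
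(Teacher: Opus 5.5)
Your proposal is correct and follows essentially the paper's route: the paper deduces the formula directly from Theorem~\ref{L_hitting_time} and the fact that a geometric variable with success probability $p$ has mean $p^{-1}$. You additionally verify that mean via the series $\sum_{k\ge 1} k q^{k-1} = (1-q)^{-2}$ and correctly observe that only linearity of expectation, not independence, is needed; the uniform case with $p_i = 1/n$ is handled the same way.
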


\begin{proof}
    This result follows from Theorem~\ref{L_hitting_time} and the well-known fact that if $X$ is a geometric
    random variable with success probability $p$, then $\mathbb{E}[X] = p^{-1}$.
\end{proof}

%~\ref

%|-A~\
%----------------------------------
\section{$K_n$ as a Metric Space}\label{MET}

In Section~\ref{L_Function}, we defined the level function $\mathcal{L}$.
We can rewrite its definition as
\[
\mathcal{L}(x) = \min\{i \in \{0, 1, \dots, n\} \mid \overline{\partial}_{[i]}(x) = \overline{\partial}_{[i]}(f)\}.
\]

This gives rise to the definition of the map $d : K_n \times K_n \to \{0, 1, \dots, n\}$:
\begin{equation}\label{Kn_Metric}
    d(x, y) = \min\{i \in \{0, 1, \dots, n\} \mid \overline{\partial}_{[i]}(x) = \overline{\partial}_{[i]}(y) \}.
\end{equation}
This map is well-defined, since $\overline{\partial}_{[n]}(x) = \overline{\partial}_{[n]}(y)$ for any $x, y \in K_n$.
We also have $\mathcal{L}(x) = d(x, f)$.

\begin{theorem}
    The pair $(K_n, d)$ is an ultrametric space.
\end{theorem}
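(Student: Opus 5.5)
We verify the ultrametric axioms directly from definition~\eqref{Kn_Metric}. The key preliminary observation is a monotonicity property, analogous to Lemma~\ref{lf1}: if $\overline{\partial}_{[i]}(x) = \overline{\partial}_{[i]}(y)$ for some $i$, then $\overline{\partial}_{[k]}(x) = \overline{\partial}_{[k]}(y)$ for every $k \in \{i, i+1, \dots, n\}$. Indeed, by Proposition~\ref{del_prop} we have $\overline{\partial}_{[k]} = \overline{\partial}_{[k]\setminus[i]}\,\overline{\partial}_{[i]}$, so applying the map $\overline{\partial}_{[k]\setminus[i]}$ to both sides of the equality preserves it. Consequently, the set
\[
\mathcal{E}(x,y) = \{i \in \{0,1,\dots,n\} \mid \overline{\partial}_{[i]}(x) = \overline{\partial}_{[i]}(y)\}
\]
is an up-set in $\{0,1,\dots,n\}$, namely $\mathcal{E}(x,y) = \{d(x,y), d(x,y)+1, \dots, n\}$. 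In other words, $\overline{\partial}_{[k]}(x) = \overline{\partial}_{[k]}(y)$ holds if and only if $k \geq d(x,y)$.

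Symmetry and nonnegativity are immediate from the definition. For the identity of indiscernibles, note that $d(x,y) = 0$ means $\overline{\partial}_{[0]}(x) = \overline{\partial}_{[0]}(y)$, that is, $x = y$, since $\overline{\partial}_{[0]}$ is the identity. Conversely, $d(x,x) = 0$.

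For the strong triangle inequality, let $x, y, z \in K_n$ and set $k = \max\{d(x,y), d(y,z)\}$. By the up-set property we have $\overline{\partial}_{[k]}(x) = \overline{\partial}_{[k]}(y)$ and $\overline{\partial}_{[k]}(y) = \overline{\partial}_{[k]}(z)$. Hence $\overline{\partial}_{[k]}(x) = \overline{\partial}_{[k]}(z)$, so $k \in \mathcal{E}(x,z)$, and therefore
\[
d(x,z) \leq \max\{d(x,y), d(y,z)\}.
\]

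The only step requiring any care is the monotonicity (up-set) property, which rests entirely on the factorisation of deletion endomorphisms from Proposition~\ref{del_prop}. Once it is in place, everything else is routine.
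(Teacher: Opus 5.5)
Your proof is correct and follows essentially the same route as the paper. The paper derives the strong triangle inequality from Proposition~\ref{del_prop}, using the absorption $\overline{\partial}_{[k]}\overline{\partial}_{[i]} = \overline{\partial}_{[k]}$ for $i \leq k$ directly; you instead isolate it as an up-set lemma via the factorisation $\overline{\partial}_{[k]} = \overline{\partial}_{[k]\setminus[i]}\,\overline{\partial}_{[i]}$. The remaining axioms are handled identically, using that $\overline{\partial}_{[0]}$ is the identity.
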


\begin{proof}
    Let $x, y \in K_n$.
    We of course have $d(x, y) \geq 0$.
    We have $d(x, y) = 0$ if and only if $\overline{\partial}_{[0]}(x) = \overline{\partial}_{[0]}(y)$.
    Since $x = \overline{\partial}_{[0]}(x)$ and $y = \overline{\partial}_{[0]}(y)$, the condition $d(x, y) = 0$ is equivalent to $x=y$.

    By the definition of $d$, we have $d(x, y) = d(y, x)$.

    Let $z \in K_n$.
    We set $i = d(x, z)$, $j = d(z, y)$, and $k = \max\{i, j\}$. 
    Since $[i], [j] \subseteq [k]$, we have
    \begin{align*}
        \overline{\partial}_{[k]}(x) 
        &= \overline{\partial}_{[k]}(\overline{\partial}_{[i]}(x)) \\
        &= \overline{\partial}_{[k]}(\overline{\partial}_{[i]}(z)) \\
        &= \overline{\partial}_{[k]}(z) \\
        &= \overline{\partial}_{[k]}(\overline{\partial}_{[j]}(z)) \\
        &= \overline{\partial}_{[k]}(\overline{\partial}_{[j]}(y)) \\
        &= \overline{\partial}_{[k]}(y).
    \end{align*}
    Hence we obtain
    \[
    d(x, y) \leq k = \max\{d(x, z), d(z, y)\},
    \]
    which concludes the proof.
\end{proof}

For $a \in K_n$ and $r \in \{0, 1, \dots, n\}$, we denote by
\begin{align*}
    B(a, r) &= \{x \in K_n \mid d(a, x) \leq r\}, \\
    S(a, r) &= \{x \in K_n \mid d(a, x) = r\},
\end{align*}
the metric ball and metric sphere with centre $a$ and radius $r$, respectively.

\begin{lemma}\label{ball_lemma}
    We have that $x \in B(f, r)$ if and only if 
    $x e_{[r]} = f$.
\end{lemma}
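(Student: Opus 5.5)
The plan is to unwind the definitions and reduce the statement to Theorem~\ref{a_b_theorem}. By definition, $x \in B(f, r)$ means $d(f, x) \leq r$. Since $d$ is symmetric and $d(x, f) = \mathcal{L}(x)$, this is the same as $\mathcal{L}(x) \leq r$. It therefore suffices to prove
\[
\mathcal{L}(x) \leq r \iff r \in \mathcal{B}_x,
\]
with $\mathcal{B}_x$ as in~\eqref{a_b}.

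For the forward direction, assume $\mathcal{L}(x) \leq r$. Since $r \in \{\mathcal{L}(x), \dots, n\}$, Corollary~\ref{RL2} gives $\overline{\partial}_{[r]}(x) = e_{[n] \setminus [r]}$, so $r \in \mathcal{A}_x$. Theorem~\ref{a_b_theorem} states $\mathcal{A}_x = \mathcal{B}_x$, hence $x e_{[r]} = f$.

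For the converse, assume $x e_{[r]} = f$. Then $r \in \mathcal{B}_x = \mathcal{A}_x$, and since $\mathcal{L}(x) = \min \mathcal{A}_x$, we get $\mathcal{L}(x) \leq r$.

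There is no real obstacle here. The only point needing care is the upward-closedness of $\mathcal{A}_x$, which Corollary~\ref{RL2} supplies. Without it, the forward direction would only show $x e_{[\mathcal{L}(x)]} = f$ rather than $x e_{[r]} = f$. One could alternatively get upward-closedness of $\mathcal{B}_x$ directly: $x e_{[r]} = x e_{[\mathcal{L}(x)]}\, y$ for a suitable $y$, and $f$ is a zero element. However, the route through Corollary~\ref{RL2} and Theorem~\ref{a_b_theorem} is cleaner and uses only stated results.
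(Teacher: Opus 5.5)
Your proof is correct and follows essentially the same route as the paper. The forward direction uses Corollary~\ref{RL2} and Theorem~\ref{a_b_theorem}. For the converse you invoke $\mathcal{A}_x = \mathcal{B}_x$ directly, whereas the paper passes through $m(x) \leq r$ and Theorem~\ref{ml_theorem}; that theorem is itself proved from exactly this identity, so the two arguments coincide.
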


\begin{proof}
    If $x \in B(f, r)$, then $\mathcal{L}(x) \leq r$ and by Corollary~\ref{RL2}, we have 
    \[
    \overline{\partial}_{[r]}(x) = e_{[n] \setminus [r]}.
    \]
    By Theorem~\ref{a_b_theorem}, we then have $x e_{[r]} = f$.
    
    If $x e_{[r]} = f$, then $m(x) \leq r$, and hence by Theorem~\ref{ml_theorem}, we have $\mathcal{L}(x) \leq r$, and hence $x \in B(f, r)$.
\end{proof}

\begin{corollary}
    We have that $|B(f, 1)| = 1 + |K_{n-1}|$ and consequently $|S(f, 1)| = |K_{n-1}|$.
\end{corollary}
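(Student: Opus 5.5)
By Lemma~\ref{ball_lemma} with $r = 1$, an element $x$ lies in $B(f, 1)$ exactly when $x e_{[1]} = f$. Since $e_{[1]} = a_1$, this reads $x a_1 = f$, so $B(f, 1) = R$. Proposition~\ref{bc1} then gives
\[
|B(f, 1)| = |R| = 1 + |K_{n-1}|.
\]

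For the sphere, note that $S(f,1) = B(f,1) \setminus B(f,0)$. We have $B(f,0) = \{x \mid d(f,x) = 0\} = \{f\}$, since $d$ is a metric; equivalently, $\mathcal{L}(x) = 0$ if and only if $x = f$ by Lemma~\ref{RU1}. Also $f \in B(f,1)$, because $f a_1 = f$. Hence
\[
|S(f,1)| = |B(f,1)| - 1 = |K_{n-1}|.
\]

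The only point requiring care is identifying $e_{[1]}$ with $a_1$, which is immediate from the definition of $e_X$. No genuine obstacle is expected.
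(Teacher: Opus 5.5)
Your proposal is correct and follows essentially the same route as the paper: Lemma~\ref{ball_lemma} with $r=1$ identifies $B(f,1)$ with $R=\{x \in K_n \mid x a_1 = f\}$, Proposition~\ref{bc1} gives the count, and removing $f$ yields $|S(f,1)|$. Your explicit justification that $S(f,1) = B(f,1)\setminus\{f\}$ (via $B(f,0)=\{f\}$ and $f a_1 = f$) spells out a step the paper only asserts.
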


\begin{proof}
    By Lemma~\ref{ball_lemma}, we have 
    \begin{equation*}
        B(f, 1) = \{x \in K_n \mid x a_1 = f\}.
    \end{equation*}
    By Proposition~\ref{bc1}, it follows that $|B(f, 1)| = 1 + |K_{n-1}|$.
    Since $S(f, 1) = B(f, 1) \setminus \{f\}$, we have $|S(f, 1)| = |K_{n-1}|$.
\end{proof}

\begin{proposition}\label{easy_prop}
    We have that $S(f, n) = \{x \in K_n \mid c(x) \subseteq [n-1]\}$.
\end{proposition}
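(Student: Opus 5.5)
The statement is a direct reformulation of Lemma~\ref{L_equal_n}, so the plan is to unwind the definitions and invoke that lemma. By definition, $S(f, n) = \{x \in K_n \mid d(f, x) = n\}$. The metric $d$ is symmetric, and we observed that $\mathcal{L}(x) = d(x, f)$ for all $x \in K_n$. Hence
\[
S(f, n) = \{x \in K_n \mid \mathcal{L}(x) = n\}.
\]

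The second step is to apply Lemma~\ref{L_equal_n}. It states that $\mathcal{L}(x) = n$ if and only if $c(x) \subseteq [n-1]$, and substituting this into the set above gives exactly $\{x \in K_n \mid c(x) \subseteq [n-1]\}$.

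I expect no real obstacle. The only point to check is that the identity $\mathcal{L}(x) = d(x, f)$ holds. This follows by comparing the rewritten definition of $\mathcal{L}$ at the start of Section~\ref{MET}, which uses $\overline{\partial}_{[i]}(f) = e_{[n]\setminus[i]}$ (Lemma~\ref{lemma:0}), with the definition~\eqref{Kn_Metric} of $d$. Since both are the minimum of the same index set, they agree.
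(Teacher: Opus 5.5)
Your proposal is correct and follows the paper's proof, which simply states that the result follows immediately from Lemma~\ref{L_equal_n}. You also make explicit the identification $S(f,n)=\{x \in K_n \mid \mathcal{L}(x)=n\}$ via the symmetry of $d$ and the identity $\mathcal{L}(x)=d(x,f)$, which the paper leaves implicit.
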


\begin{proof}
    This follows immediately from Lemma~\ref{L_equal_n}.
\end{proof}

\begin{corollary}
    We have $|S(f, n)| = |K_{n-1}|$.
\end{corollary}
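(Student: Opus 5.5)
By Proposition~\ref{easy_prop}, $S(f, n)$ is exactly the set of elements $x \in K_n$ with $c(x) \subseteq [n-1]$. My plan is to identify this set with a copy of $K_{n-1}$ inside $K_n$ and then count it.

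First, since $c$ is a semigroup homomorphism into $(\mathbbm{2}^{[n]}, \cup)$, an element $x$ satisfies $c(x) \subseteq [n-1]$ if and only if $x = e$ or $x$ can be written as a product of generators from $\{a_1, \dots, a_{n-1}\}$. Hence
\[
S(f, n) = \{e\} \cup \langle a_1, \dots, a_{n-1} \rangle .
\]
Next I would use the fact, already invoked in the proof of Lemma~\ref{L3} for the generators $a_{m+1}, \dots, a_n$, that a subsemigroup of $K_n$ generated by a consecutive block of generators is isomorphic to the Kiselman semigroup of the corresponding size. Applied to $a_1, \dots, a_{n-1}$, this gives $\langle a_1, \dots, a_{n-1}\rangle \cong K_{n-1}$ via $a_i \mapsto a_i$. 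Alternatively, this follows from the injectivity of the natural map $K_{n-1} \to K_n$, which can be obtained using the deletion endomorphism $\overline{\partial}_n$ as a retraction onto words not involving $a_n$.

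The remaining point, and the only delicate one, is the unit. Here the convention matters: $K_{n-1}$ should be read as a monoid containing its own unit, as the paper does throughout. With that reading, the element $e \in K_n$ corresponds to the unit of $K_{n-1}$. Since $e$ is not a product of generators, it is not already in $\langle a_1, \dots, a_{n-1}\rangle$, so the union above is disjoint. On the other side, the unit of $K_{n-1}$ is likewise not counted in $\langle a_1, \dots, a_{n-1}\rangle$ under the isomorphism.

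Therefore the map $\{e\} \cup \langle a_1, \dots, a_{n-1}\rangle \to K_{n-1}$, sending $e$ to the unit and each $a_i$ to $a_i$, is a bijection. This gives $|S(f, n)| = |K_{n-1}|$.
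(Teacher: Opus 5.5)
Your proposal is correct and follows the same route as the paper, which combines Proposition~\ref{easy_prop} with the asserted bijection between $\{x \in K_n \mid c(x) \subseteq [n-1]\}$ and $K_{n-1}$; you make that bijection explicit as the natural map $K_{n-1} \to K_n$, $a_i \mapsto a_i$. Two tidy-ups: first, under the paper's convention $\langle a_1, \dots, a_{n-1}\rangle$ already contains $e$ (as it must in Lemma~\ref{L3}), so your set is literally that submonoid and the separate unit bookkeeping can be dropped, which also removes the inconsistency of first claiming $\langle a_1,\dots,a_{n-1}\rangle \cong K_{n-1}$ and then saying the unit is not in it; second, injectivity is cleanest via the homomorphism $K_n \to K_{n-1}$ with $a_n \mapsto e$ and $a_i \mapsto a_i$ for $i<n$, whose composite with the natural map is the identity, rather than via $\overline{\partial}_n$ itself, which is an endomorphism of $K_n$.
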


\begin{proof}
    The proof is immediate from Proposition~\ref{easy_prop} and the
    fact that 
    $\{x \in K_n \mid c(x) \subseteq [n-1]\}$ is in bijection with $K_{n-1}$.
\end{proof}

\bmhead{Acknowledgements}

I am thankful to my close family and friends for their support, help, and motivation.
Thank you for everything.

\bibliography{sn-bibliography.bib}

\end{document}